\documentclass[12pt]{amsart}
\usepackage{enumitem}
\textheight22.5cm \textwidth14.4cm
\topmargin-0.2cm
\evensidemargin.7cm \oddsidemargin.7cm

\begin{document}

\newtheorem{thm}{Theorem}
\newtheorem{lem}{Lemma}[section]
\newtheorem{cor}[lem]{Corollary}
\theoremstyle{definition}
\newtheorem*{defn}{Definition}
\newtheorem*{ack}{Acknowledgements}
\theoremstyle{remark}
\newtheorem*{remarks}{Remarks}
\newcommand{\Z}{\mathbb{Z}}
\newcommand{\D}{\mathbb{D}}
\newcommand{\R}{\mathbb{R}}
\newcommand{\N}{\mathbb{N}}
\newcommand{\C}{\mathbb{C}}
\newcommand{\capac}{\operatorname{cap}}
\newcommand{\eqn}{\begin{equation}}
\newcommand{\eqnend}{\end{equation}}

\numberwithin{equation}{section}

\title[Slow escaping points of quasiregular maps]{Slow escaping points of quasiregular mappings}

 \subjclass[2010]{Primary 37F10; Secondary 30C65, 30D05}

\author{Daniel A. Nicks}
\address{School of Mathematical Sciences, University of Nottingham, Nottingham NG7 2RD, UK}
\email{Dan.Nicks@nottingham.ac.uk}
\thanks{The author was supported by Engineering and Physical Sciences Research Council grant EP/L019841/1.}

\begin{abstract}
This article concerns the iteration of quasiregular mappings on $\R^d$ and entire functions on $\C$. It is shown that there are always points at which the iterates of a quasiregular map tend to infinity at a controlled rate. Moreover, an asymptotic rate of escape result is proved that is new even for transcendental entire functions.

Let $f\colon\R^d\to\R^d$ be quasiregular of transcendental type. Using novel methods of proof, we generalise results of Rippon and Stallard in complex dynamics to show that the Julia set of $f$ contains points at which the iterates $f^n$ tend to infinity arbitrarily slowly. We also prove that, for any large $R$, there is a point $x$ with modulus approximately $R$ such that the growth of $|f^n(x)|$ is asymptotic to the iterated maximum modulus $M^n(R,f)$.
\end{abstract}
\maketitle

\section{Introduction}

In complex dynamics, a central object of study is the \emph{escaping set}
\[I(f)=\{ z\in\C : f^n(z)\to\infty\} \]
 of a transcendental entire function~$f$. There is much interest in both the structure of escaping sets \cite{Rempe11, RSFast, RRRS, Sixsmith11} and also various subsets consisting of points at which the iterates tend to infinity particularly quickly or particularly slowly \cite{BP, RSZip, RSSlow, SixsmithSlow}. Quasiregular mappings are a natural higher-dimensional generalisation of analytic functions, and there is a growing literature exploring the analogies between complex dynamics and quasiregular dynamics; see for example \cite{Bergw2013, BDF, BN, HMM, NS}. This paper investigates how the rate of escape of the iterates of a quasiregular mapping can be controlled.

For a transcendental entire function $f$, Eremenko \cite{Eremenko89} proved that the escaping set is always non-empty and in fact always meets the Julia set~$J(f)$. The fast escaping set~$A(f)$ was introduced by Bergweiler and Hinkkanen~\cite{BH} and has been studied in detail by Rippon and Stallard~\cite{RS05, RSFast, RS13, RS14}. This subset of $I(f)$ is often described as containing those points that escape to infinity `as fast as possible' under iteration of the entire function~$f$ and it is again known that ${A(f)\cap J(f) \ne\emptyset}$. Motivated partly by the question whether there could be a transcendental entire function for which all escaping points were actually fast escaping, Rippon and Stallard proved in \cite{RSSlow} that, for any transcendental meromorphic function~$f$, there are points in the Julia set at which the iterates $f^n$ tend to infinity arbitrarily slowly. Theorems~\ref{thm1} and~\ref{thm3} below generalise some of the main results of \cite{RSSlow} to the quasiregular setting.

The definition of a quasiregular mapping $f\colon\R^d\to\R^d$ is given in Section~\ref{sect2}.  If $|f(x)|\to\infty$ as $|x|\to\infty$, then the mapping is said to be of \emph{polynomial type}. Otherwise, $f$ has an essential singularity at infinity and is called \emph{transcendental type}. It was shown in \cite{BFLM} that a quasiregular mapping~$f$ of transcendental type always has a non-empty escaping set,
\[ I(f) = \{ x\in\R^d : f^n(x)\to\infty \mbox{ as } n\to\infty\} \ne \emptyset. \]
Indeed, it was later proved that the fast escaping set~$A(f)$ is also non-empty~\cite{BDF}. Our first result below is a quasiregular counterpart to \cite[Theorem~1]{RSSlow}: we will see that there are points in the Julia set that escape to infinity arbitrarily slowly. In this quasiregular context, we follow \cite{Bergw2013, BN} in defining the \emph{Julia set} $J(f)$ to be the set of all $x\in\R^d$ such that the complement of $\bigcup_{n=1}^\infty f^n(U)$ has conformal capacity zero for every neighbourhood $U$ of~$x$. (Note that sets of conformal capacity zero are small; for example, they have Hausdorff dimension zero. See~\cite{Rickman} for the definition.) For a transcendental entire function, this is equivalent to the classical non-normality definition of the Julia set; see \cite{BN}.

\begin{thm}\label{thm1}
Let $f\colon\R^d\to\R^d$ be a quasiregular map of transcendental type. Then, for any positive sequence $a_n\to\infty$, there exist $\zeta\in J(f)$ and $N_1\in\N$ such that $|f^n(\zeta)|\to\infty$ as $n\to\infty$, but also $|f^n(\zeta)|\le a_n$ for $n\ge N_1$.
\end{thm}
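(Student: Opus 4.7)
The plan is to construct $\zeta$ as the unique point in a nested intersection $\bigcap_{k\ge 0} K_k$ of closed sets in $\R^d$, with the $K_k$ built inductively by pulling back compact target sets at infinity under well-chosen iterates of $f$. Two features of quasiregular dynamics drive the construction. First, the result of \cite{BDF} that $A(f) \cap J(f) \ne \emptyset$ for quasiregular maps of transcendental type supplies an initial point $z_0 \in J(f)$ and a small open neighborhood $U$ of $z_0$ on which we work; this is analogous to starting from a point of $A(f) \cap J(f)$ in the entire case. Second, the defining blowing-up property of $J(f)$---that for every open neighborhood $U$ of $z_0$ the complement of $\bigcup_{n \ge 1} f^n(U)$ has conformal capacity zero---ensures that the forward iterates of $U$ meet every large annulus $\{R \le |x| \le 2R\}$, and in particular meet $J(f) \cap \{|x| \le a_m\}$ for arbitrarily large $m$.

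The inductive step runs as follows. Suppose $K_{k-1} \subset U$ is a closed set and $n_0 < n_1 < \cdots < n_{k-1}$ are indices with $|f^n(x)| \le a_n$ on $K_{k-1}$ for every $n_0 \le n \le n_{k-1}$. Choose $n_k > n_{k-1}$ very large, and, using the blowing-up property inside $K_{k-1}$ together with unboundedness of $J(f)$, find a ball $B_k$ of modulus between some auxiliary $b_k \to \infty$ and $a_{n_k}$ such that $B_k \cap f^{n_k}(K_{k-1}) \cap J(f) \ne \emptyset$. Let $K_k$ be a small closed subset of $K_{k-1}$ mapping into $B_k$ under $f^{n_k}$ and containing a Julia point $y_k$, with $\operatorname{diam}(K_k) \le 2^{-k}$. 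Then $\zeta := \bigcap_k K_k$ is a single point, and since $y_k \to \zeta$ with each $y_k \in J(f)$ and $J(f)$ is closed, $\zeta \in J(f)$; by design $|f^{n_k}(\zeta)| \ge b_k \to \infty$ and $|f^{n_k}(\zeta)| \le a_{n_k}$ for every $k$.

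The main obstacle is to control $|f^n(\zeta)|$ at the intermediate times $n_{k-1} < n < n_k$, since our design constrains only the subsequence $\{n_k\}$ and $|f^n|$ on $K_{k-1}$ could overshoot $a_n$ temporarily. In complex dynamics, normal-family or distortion tools handle this cleanly, but for general quasiregular maps these are unavailable. A workaround is to fold the intermediate times into the inductive step itself, running the construction over all indices $n$ rather than a sparse subsequence: at each step we impose $|f^n| \le a_n$ on $K_k$, using the blowing-up property at time $n$ to locate a target of modulus at most $a_n$ inside $f^n(K_{k-1})$ and shrinking $K_k$ to a suitable preimage. A careful compactness argument then ensures that the sequence of $K_k$ does not degenerate prematurely; exploiting the flexibility provided by $a_n \to \infty$ and the many possible targets at each stage yields the required $\zeta$, with the lower bounds $b_k$ built into the construction upgrading the subsequence escape $|f^{n_k}(\zeta)| \to \infty$ to the full escape $|f^n(\zeta)| \to \infty$.
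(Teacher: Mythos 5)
There is a genuine gap, and it sits exactly at the point you flag as the ``main obstacle'' and then dispose of in one sentence. Your construction rests entirely on the blowing-up property of $J(f)$: that the complement of $\bigcup_{n\ge1} f^n(U)$ has capacity zero. This is a statement about the \emph{union} of all forward images and gives no information whatsoever about where any \emph{individual} image $f^n(K_{k-1})$ lies. To get $|f^n(\zeta)|\le a_n$ for every $n\ge N_1$ you must synchronise the time index with the modulus bound, and at a fixed time $n$ it is perfectly possible that $f^n(K_{k-1})\subset\{|x|>a_n\}$ (for instance if every orbit starting in $K_{k-1}$ is forced to pass through a region where $f$ is very large, the image may overshoot $a_n$ and there is nothing to pull back). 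Consequently the set $\{x\in K_{k-1}: |f^m(x)|\le a_m \mbox{ for } n_{k-1}<m\le n_k\}$, which your refined inductive step implicitly needs to be non-empty, may simply be empty; no ``careful compactness argument'' can rescue this, because compactness only preserves non-emptiness of a nested family, it cannot create it. The same problem undoes the claimed upgrade from escape along the subsequence $(n_k)$ to full escape: between the times $n_k$ you have no lower bound on $|f^n(\zeta)|$ either. Your nested-intersection step itself is fine, but it is the easy part (it is Lemma~\ref{lem:En} of the paper); the content of the theorem is precisely the construction of sets through which the orbit can be routed one step at a time.

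What is missing is a single-step covering structure: bounded sets $E_n$ of controlled modulus with $f(E_n)\supset E_{n+1}$, and, crucially, sets at arbitrarily large scales that cover \emph{themselves} under $f$, so that the orbit can be ``held up'' there while the bound $a_n$ grows (this is the holding-up mechanism of Lemma~\ref{lem:holdup}). Producing such self-covering sets for an arbitrary quasiregular map of transcendental type is the real work, and the normal-family tools you dismiss as unavailable in the quasiregular setting are in fact available and are exactly what the paper uses: Miniowitz's quasiregular analogue of Montel's theorem (Lemma~\ref{lem:Minio}) applied to rescalings $x\mapsto f(R_kx)/R_k$, together with a quasiregular Harnack inequality, and a dichotomy according to whether $f$ has the pits effect (deep pits give balls $V_k$ with $f(V_k)\supset B(0,2R_k)\supset V_k$; otherwise rescaling limits give domains $R_kU_j$ with $f(R_kU_j)\supset R_kU_l$ and with closures meeting $J(f)$). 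Some substitute for this covering machinery is indispensable; the capacity-zero blow-up property alone is too weak to control the orbit at every time step, so as written the proposal does not prove the theorem.
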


To state our next result, we recall that the \emph{maximum modulus} is given by $M(r,f)=\max_{|x|=r} |f(x)|$ and we denote the \emph{iterated maximum modulus} by
\[ M^1(r,f) = M(r,f) \quad \mbox{ and } \quad M^n(r,f) = M(M^{n-1}(r,f),f) \mbox{ for } n\ge2. \]
Observe that if $|x|\le r$, then $|f^n(x)|\le M(r,f^n) \le M^n(r,f)$ by the maximum principle. For quasiregular maps of transcendental type, we show that we can always find points for which the rate of escape is asymptotic to an iterated maximum modulus. This result is new even for transcendental entire functions in the plane.

\begin{thm}\label{thm2}
Let $f\colon\R^d\to\R^d$ be a quasiregular map of transcendental type and let $c>1$. Then there exists $r_1$ such that, for every $R\ge r_1$, there is $x\in I(f)$ satisfying $R/c \le |x| \le cR$ and
\[ \lim_{n\to\infty} \frac{|f^n(x)|}{M^n(R,f)} = 1. \]
\end{thm}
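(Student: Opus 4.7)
My plan is to build $x$ as the limit of approximating points $x_N$, each of whose first $N$ iterates closely track the sequence $R_n := M^n(R, f)$. Fix in advance a sequence $\epsilon_n \downarrow 0$. The aim of stage $N$ is to produce $x_N$ with $R/c \le |x_N| \le cR$ such that $(1-\epsilon_n) R_n \le |f^n(x_N)| \le (1+\epsilon_n) R_n$ for every $n = 1, \ldots, N$. Given such $x_N$, compactness of the closed annulus $\{y \in \R^d : R/c \le |y| \le cR\}$ produces a subsequence converging to some $x$, and continuity of each iterate $f^n$ passes the two-sided bounds to the limit. Hence $|f^n(x)|/R_n \to 1$, and $x \in I(f)$ follows automatically since $R_n \to \infty$ (as $f$ is of transcendental type).

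The construction of $x_N$ I would carry out by a backward pull-back along the spheres $S(R_n)$. The crucial geometric input is a quasiregular covering lemma of roughly the form: for all sufficiently large $r$ and every $\delta > 0$, there is an open set $V \subset \{y : r/(1+\delta) < |y| < (1+\delta)r\}$ such that $f(V)$ contains a neighborhood of some point of modulus $M(r, f)$. This rests on three ingredients---the attainment of $M(r,f)$ at some $p \in S(r)$; the openness of quasiregular maps, which guarantees $f(B(p, \rho)) \supset B(f(p), \sigma)$ for appropriate $\rho, \sigma > 0$; and quantitative distortion estimates that let one take $\rho$ as small as $r\delta$. Applied at the radii $R_N, R_{N-1}, \ldots, R_1$ in turn, the lemma allows one to pick a target near a maximum-modulus point on $S(R_N)$ and successively pull back into the narrowing annuli around each $S(R_n)$, delivering $x_N$ with the tracking property, provided $R$ is sufficiently large in terms of the sequence $(\epsilon_n)$.

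The main obstacle I foresee is the quantitative sharpness of the covering lemma. Existing constructions of escaping and fast-escaping points in the quasiregular setting, as in \cite{BDF, BFLM}, only require pulling back into annuli of fixed, bounded relative width, while here that width must shrink along $(\epsilon_n)$. Turning the abstract openness of $f$ into a pull-back landing in an annulus of prescribed relative width $\epsilon_n$, and simultaneously ensuring at each level that the pulled-back point lies close to a maximum of $|f|$ on the next sphere, is the substantive novelty of the argument. A natural tool is a uniform local $K$-quasiconformal distortion estimate at maximum-modulus points, combined with a compactness argument on the non-empty compact set of such points on each sphere $S(R_n)$.
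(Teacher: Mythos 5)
Your limiting frame (build $x_N$ whose first $N$ iterates track $M^n(R,f)$, then extract a convergent subsequence and pass the closed two-sided bounds to the limit) is sound, and it plays the same role as Lemma~\ref{lem:En} in the paper. The genuine gap is the covering lemma you postulate, which your proposed tools do not deliver. As you state it (``$f(V)$ contains a neighbourhood of \emph{some} point of modulus $M(r,f)$''), the lemma is trivially true by openness but useless for the chaining: to pull back from level $n$ to level $n-1$ you must hit the \emph{specific} point, or specific small set, already selected at level $n$, whereas openness only yields that $f(V)$ contains an uncontrolled neighbourhood of the particular value $f(p)$, where $p$ is whichever maximum-modulus point you used; there is no reason that neighbourhood contains your previously chosen target (which sits near a maximum-modulus point of the \emph{next} sphere, a quite different location in general), and no lower bound on its size relative to $M(r,f)$. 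No ``uniform local quasiconformal distortion estimate at maximum-modulus points'' will supply such a bound: quasiregular maps branch with arbitrarily large local index, and even for entire functions the image of a ball of radius $\delta r$ about a maximum-modulus point may be a tiny neighbourhood of $f(p)$ on the scale of $M(r,f)$.

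The paper resolves precisely this difficulty by abandoning the attempt to cover a prescribed target and instead covering \emph{one of} $2q$ candidate targets, $q$ being Rickman's constant. The sets $Q_{j,k}(r)$ are unions of a thin annular piece and a truncated cone inside $A\bigl((1-\tfrac{1}{k+1})r,(1+\tfrac1k)r\bigr)$, arranged so that any selection of one point from each contains $q$ uniformly separated points. If $f(Q_{j,k}(r))$ covered none of $Q_{1,k}(M(r,f)),\ldots,Q_{2q,k}(M(r,f))$, the rescalings $h_r(x)=f(rx)/M(r,f)$ would omit enough separated values to be normal by Miniowitz's quasiregular Montel theorem (Lemma~\ref{lem:Q-Minio}), contradicting the non-normality that follows from Lemma~\ref{lem:M(Ar)}; this yields Lemma~\ref{lem:Qjk}, and the orbit is then built \emph{forwards} via $f(E_n)\supset E_{n+1}$ rather than by backward pull-back. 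A secondary structural point: you cannot fix the widths $\epsilon_n$ in advance and then take $R$ large, since the threshold radius at which relative width $\epsilon_n$ becomes available may grow faster than $M^n(R,f)$; the paper instead thins the shells adaptively, increasing $k_n$ only when $M^{n+1}(R,f)\ge r_{k_n+1}$, and observes that this happens infinitely often, so $k_n\to\infty$ and the ratio still tends to $1$. Some Picard--Rickman type counting argument (or a covering theorem of comparable strength) appears unavoidable here; openness plus distortion will not do.
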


In the special case of transcendental entire functions, the escaping point found in Theorem~\ref{thm2} must actually have modulus at least~$R$. This follows from \cite[Lemma~2.2]{RS09}, for example. We mention also that, for a transcendental entire function $f$, Sixsmith \cite{Sixsmith} has studied the set of escaping points $z\in\C$ for which some iterate $y=f^N(z)$ satisfies $|f^n(y)|=M^n(|y|,f)$ for all $n\ge 1$, although examples show that this set may be empty.

It is already known that if $f$ is quasiregular of transcendental type, $c>1$ and $R$ is large, then there exists a point $x$ such that $R\le |x| \le cR$ and $|f^n(x)|\ge M^n(R,f)$ for $n\ge1$. This follows from results in \cite{BDF}. In the entire case, Eremenko's original method for finding escaping points can be used to obtain such points; see the proof of \cite[Lemma~2]{BH} and also \cite[Theorem~2.5]{RSFast}. In each of these existing constructions, however, the iterates $f^n(x)$ may tend to infinity much faster than $M^n(R,f)$. Hence, Theorem~\ref{thm2} may be viewed as a deceleration of these previous results. Nonetheless, the points obtained in Theorem~\ref{thm2} are still members of the fast escaping set $A(f)$.

One can ask whether it is possible in general to find points satisfying any given upper and lower bounds on the rate of escape. Our final result gives necessary and sufficient conditions under which a two-sided version of Theorem~\ref{thm1} holds.

\begin{thm}\label{thm3}
Let $f\colon\R^d\to\R^d$ be a quasiregular map of transcendental type. Then the following are equivalent:
\begin{itemize}
	\item[(a)] for every positive sequence $a_n\to\infty$ satisfying $a_{n+1}=O(M(a_n,f))$, there exist $\zeta\in\R^d$ and $C>1$ such that, for all $n\in\N$,
	\[ a_n \le |f^n(\zeta)| \le Ca_n; \]
	\item[(b)] there exist positive constants $L$ and $c$ and a sequence of points $(x_n)$ tending to infinity such that, for all $n\in\N$,
	\[ \frac{|x_{n+1}|}{|x_n|} \le L \quad \mbox{ and } \quad |f(x_n)|\le c; \]
	\item[(c)] there exist constants $L'>1$ and $0<s<1$ and a sequence of points $(x'_n)$ tending to infinity such that, for all $n\in\N$,
	\[ \frac{|x'_{n+1}|}{|x'_n|} \le L' \quad \mbox{ and } \quad |f(x'_n)|\le M(|x'_n|,f)^s. \]
\end{itemize}
Moreover, when the above all hold, the point $\zeta$ appearing in \textup{(a)} may be chosen to lie in $J(f)$.
\end{thm}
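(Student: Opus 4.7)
My plan is to prove the equivalence by the chain $(b)\Rightarrow(c)\Rightarrow(a)\Rightarrow(c)$, together with a separate step $(c)\Rightarrow(b)$ and the Julia-set refinement. The implication $(b)\Rightarrow(c)$ is immediate: since $f$ is of transcendental type, $M(|x_n|,f)\to\infty$, so $c \le M(|x_n|,f)^s$ for all large $n$ and any fixed $s\in(0,1)$, and the sequence from $(b)$ itself witnesses $(c)$. The implication $(a)\Rightarrow(c)$ is similarly short: applied to the slow sequence $a_n = n + n_0$, condition $(a)$ produces $\zeta$ with $n + n_0 \le |f^n(\zeta)| \le C(n+n_0)$; setting $x'_n := f^n(\zeta)$, the bounded ratios are clear, and $|f(x'_n)| \le C(n+n_0+1) \le M(|x'_n|,f)^s$ for all large $n$, because $M(r,f)/r^k\to\infty$ for every $k$ by transcendentality.

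The central content is $(c)\Rightarrow(a)$. Given $(a_n)$ with $a_{n+1}\le K M(a_n,f)$, I would first select indices $m_n$ so that $|x'_{m_n}|$ lies in a fixed multiplicative interval around $a_n$: this is possible because $(|x'_n|)$ has bounded ratios, hence no large multiplicative gaps in moduli. Let $A_n$ be an annulus of bounded multiplicative width centred on $|x'_{m_n}|$. On $A_n$, the modulus $|f|$ achieves both the small value $|f(x'_{m_n})|\le M(|x'_{m_n}|,f)^s$ and the large value $M(|x'_{m_n}|,f)$. The critical step is to use the quasiregular covering properties developed in~\cite{BDF} to show that $f(A_n)$ contains an annulus wide enough to include $B_{n+1}:=\{y:a_{n+1}\le|y|\le C a_{n+1}\}$. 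A standard nested compact sets argument then produces $\zeta$: define $E_n := \bar B_0 \cap f^{-1}(\bar B_1)\cap\cdots\cap f^{-n}(\bar B_n)$, verify compactness and non-emptiness by induction using the covering, and take $\zeta\in\bigcap_n E_n$.

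To close the cycle I would derive $(c)\Rightarrow(b)$ from the same machinery, by running the above construction with a target $(a_n)$ designed to force the preimage choices to land in a bounded-modulus region, thereby producing the sequence $(x_n)$ required for $(b)$. For the Julia-set refinement, the plan is to shrink the sets $\bar B_n$ sufficiently during the nested construction so that the covering-deficit property characterising $J(f)$ in~\cite{BN} is verified at~$\zeta$ (every neighbourhood of~$\zeta$ contains one of the $E_n$'s, whose forward iterates then sweep out a set of full conformal capacity). The main technical obstacle will be the covering step $f(A_n)\supseteq B_{n+1}$ in the regime $a_{n+1} < |f(x'_{m_n})|$, since intermediate-value reasoning on $A_n$ only delivers $|f|$-values in $[|f(x'_{m_n})|,M(|x'_{m_n}|,f)]$. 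Overcoming this requires a genuine quasiregular Koebe- or Ahlfors-type covering theorem rather than mere continuity, so that $f(A_n)$ actually reaches moduli below $|f(x'_{m_n})|$ and can accommodate slowly-growing targets.
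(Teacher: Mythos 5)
Your easy implications are fine: (b)$\Rightarrow$(c) is immediate, and your direct (a)$\Rightarrow$(c) via the test sequence $a_n=n+n_0$ works (the paper instead argues the contrapositive, but both are sound). The problem is the central step (c)$\Rightarrow$(a), and the ``main technical obstacle'' you flag at the end is fatal rather than technical. Since $f$ is continuous, $f(A_n)\subset\{y:|y|\ge \inf_{\overline{A_n}}|f|\}$, so no Koebe- or Ahlfors-type covering theorem can ever make $f(A_n)$ reach moduli below the infimum of $|f|$ on $A_n$. Condition (c) gives only an \emph{upper} bound $M(|x'_{m_n}|,f)^s$ at a single point; it is perfectly consistent with (c) that $|f(x'_{m_n})|$, and indeed $\inf_{A_n}|f|$, is of the order $M(|x'_{m_n}|,f)^{s}$, which is astronomically larger than a slowly growing target $a_{n+1}\approx a_n\approx|x'_{m_n}|$. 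In that scenario $f(A_n)$ is disjoint from $B_{n+1}$ and the nested-sets construction never starts --- and slowly growing $(a_n)$ is exactly the regime the theorem is about. Your proposed derivation of (c)$\Rightarrow$(b) ``from the same machinery'' inherits the same defect: (b) asserts the existence of points where $|f|$ is bounded by an absolute constant, and no covering construction of forward orbits produces such points.

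The missing idea is to upgrade (c) to (b) \emph{first}, by showing that relatively small values of $|f|$ force absolutely bounded values nearby. The paper does this with the quasiregular Harnack inequality applied to $g=f\circ Z$, where $Z$ is a Zorich map (the quasiregular analogue of $\exp$): if $|f|>1$ on a fixed-width annulus around $|x'_n|$, then Harnack's inequality comparing the point $v$ with $Z(v)=x'_n$ to a maximum-modulus point gives $\log M(|x'_n|,f)\le a\log|f(x'_n)|$ with $a<1/s$, contradicting (c); hence there are points $x_n$ with $|x_n|$ comparable to $|x'_n|$ and $|f(x_n)|\le 1$. The escape construction is then run as (b)$\Rightarrow$(a): the paper's covering lemma (Lemma~\ref{lem:shells}, proved by rescaling and Miniowitz's normality criterion) has as a hypothesis precisely that $\min|f|\le c$ on a sub-shell, and this is what licenses $f$ of a shell around $a_n$ to cover shells at \emph{any} prescribed level $R\le\lambda M(a_n,f)$, however slowly $(a_n)$ grows. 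Finally, the Julia-set refinement does not follow from your neighbourhood/capacity sketch (the nested sets $E_n$ all contain $\zeta$ but need not shrink to it, and one would still have to control the omitted set's capacity); the paper instead arranges $p>K_I(f)+q$ pairwise disjoint shells each of which covers at least $p-q$ of the others, applies the mutual-covering criterion (Lemma~\ref{lem:Julia2}) to get $\overline{E_n}\cap J(f)\ne\emptyset$, and then uses complete invariance, the second part of Lemma~\ref{lem:En}, and finitely many applications of Rickman's theorem to pull the point back into $J(f)$.
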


This result was proved for transcendental meromorphic functions with finitely many poles in \cite[Theorem~2]{RSSlow}. We repeat the remark made there that it is clear that some restriction on the sequence $(a_n)$ is needed, such as $a_{n+1}=O(M(a_n,f))$ as $n\to\infty$.

The proofs of Theorems \ref{thm1} and \ref{thm3} given here are similar to the corresponding proofs in \cite{RSSlow}, in that they use a number of different covering results combined with a `holding-up' technique to produce slowly-escaping points. A significant difference is that one key covering result (Lemma~5) in \cite{RSSlow} makes use of a convexity property  of $\log M(r,f)$ that does not hold for quasiregular mappings (cf.~\cite[Theorem~1.8]{BDF}). Instead, here we consider as a separate case those functions that are large outside certain small `pits'; see Section~\ref{sect:pits}. Other covering results will be established by normal families methods applied to suitable families of rescalings.

\begin{remarks}
\mbox{}
\begin{itemize}[leftmargin=*]
\item The escaping points in Theorem~\ref{thm2} cannot necessarily be chosen to lie within the Julia set. For example, it is shown in \cite{Zheng} that, for any transcendental entire function $f$ with a multiply-connected Fatou component, and any ${c>1}$, there are arbitrarily large $R$ such that $J(f)$ is disjoint from the annulus ${\{z\in\C : R/c \le |z| \le cR\}}$.
\item For quasiregular mappings of polynomial type, there is much less variation in the rates at which different orbits escape to infinity; see \cite[Theorem~1.4]{FN4}.
\end{itemize}
\end{remarks}

\begin{ack}
The author thanks Phil Rippon and Gwyneth Stallard for helpful discussions about their paper \cite{RSSlow} and the possibility of extending slow escape results to the quasiregular setting.
\end{ack}

\section{Preliminary results}\label{sect2}

We will begin by recalling the definition of a quasiregular mapping and stating just those properties that we shall need later. We refer to \cite{Rickman} for a more detailed introduction to the theory of quasiregular maps.

Let $d\ge 2$ and let $U \subset \R^d$ be a domain. For $1\le p < \infty$,  the \emph{Sobolev space} $W^{1}_{p, \textrm{loc}}(U)$
consists of those functions $f\colon U \rightarrow \R^d$ for which all first order weak partial derivatives
exist and are locally in $L^p$.
A continuous map $f\in W^{1}_{d, \textrm{loc}}(U)$ is called \emph{quasiregular} if there exists $K \ge1$ such that
\[
|Df(x)|^d \le K J_{f}(x)
\]
for almost every $x\in U$. Here $|Df(x)|$ is the operator norm of the derivative $Df(x)$, and $J_{f}(x)$ denotes the Jacobian determinant. The smallest constant $K$ for which this holds is called the \emph{outer dilatation} $K_{O}(f)$. If $f$ is quasiregular, then there also exists $K' \ge 1$ such that
\[
J_{f}(x) \le K' \inf_{|h|=1} |Df(x) (h)|^d
\]
for almost every $x\in U$, and the smallest $K'$ for which this holds is called the \emph{inner dilatation} $K_{I}(f)$. The \emph{dilatation} $K(f)$ of $f$ is the larger of $K_{O}(f)$ and $K_{I}(f)$, and we say that $f$ is $K$-quasiregular if $K(f)\le K$. Any function obtained by composing or iterating quasiregular mappings will itself be quasiregular. It turns out that all non-constant quasiregular maps are discrete open mappings. Note that entire functions on the complex plane are quasiregular with dilatation $K(f)=1$.

Rickman established the following quasiregular analogue of Picard's theorem.

\begin{lem}[\cite{Rickman80}]\label{lem:Rickman}
For every $d\ge 2$ and $K\ge 1$, there is an integer ${q=q(d,K)}$, called \emph{Rickman's constant}, with the following properties. If ${a_1,\ldots, a_q\in\R^d}$ are distinct, then any $K$-quasiregular mapping ${f\colon\R^d\to\R^d\setminus\{a_1,\ldots,a_q\}}$ is constant.  Moreover, for any $K$-quasiregular mapping $f\colon \R^d\to\R^d$ of transcendental type, $f^{-1}(\{a_1,\ldots,a_q\})$ is infinite.
\end{lem}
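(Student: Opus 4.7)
The plan is to prove the lemma in two stages. For the Picard-type assertion, I would argue by contradiction: assuming a non-constant $K$-quasiregular $f\colon\R^d\to\R^d\setminus\{a_1,\ldots,a_q\}$ exists, I derive a contradiction from modulus estimates for path families. The central tool is the $K_O$-inequality
\[
M(\Gamma)\le K_O(f)\,M(f(\Gamma))
\]
(valid after the usual multiplicity adjustments), together with its $K_I$-analogue supplying an opposite bound.

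After normalizing (say $f(0)=0$) and working on the balls $B(0,R)$ with $R$ large, the key step is to construct around each omitted value $a_j$ a condenser or path family separating $a_j$ from infinity in the target, and to lift these back through $f$ via the modulus inequalities. The central geometric observation in Rickman's argument is that the lifted family in the domain has modulus at least of order $q(\log R)^{d-1}/C(d,K)$, because the $q$ omitted points each contribute independently, while the image family, confined to a bounded region of $\R^d\setminus\{a_1,\ldots,a_q\}$, has modulus at most of order $(\log R)^{d-1}$ by the standard spherical annulus estimate. Choosing $q>C(d,K)$ and letting $R\to\infty$ yields the required contradiction. The main obstacle—and the source of the intricacy in Rickman's original proof—is establishing the lower bound with a constant depending only on $d$ and $K$, which requires a careful combinatorial analysis of how the sheets of $f^{-1}$ can wind around each omitted value within a large ball.

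For the second statement, I would reduce to the first part via rescaling. Assume for contradiction that $f^{-1}(\{a_1,\ldots,a_q\})$ is finite; then for some $R_0$ the map $f$ sends $\{x\in\R^d : |x|>R_0\}$ into $\R^d\setminus\{a_1,\ldots,a_q\}$. Since $f$ is of transcendental type, $|f|$ fails to tend to infinity, so there is a sequence $y_n$ with $|y_n|\to\infty$ and $f(y_n)$ bounded. Apply a Zalcman-type rescaling lemma for quasiregular maps to the family $x\mapsto f(|y_n|x)$ on $\R^d\setminus\{0\}$: invoking Miniowitz's quasiregular version of Montel's theorem, which applies precisely because these maps omit the $q$ values $a_1,\ldots,a_q$, one extracts a subsequence converging locally uniformly on $\R^d\setminus\{0\}$ to a non-constant $K$-quasiregular limit $g$ taking values in $\R^d\setminus\{a_1,\ldots,a_q\}$. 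A removable-singularity argument at the origin then extends $g$ to a non-constant $K$-quasiregular map of all of $\R^d$ into $\R^d\setminus\{a_1,\ldots,a_q\}$, contradicting the first part of the lemma and completing the proof.
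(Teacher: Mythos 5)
This lemma is not proved in the paper at all: it is quoted as an external result from \cite{Rickman80}, the first assertion being Rickman's Picard theorem and the second the accompanying ``infinitely many preimages'' statement for maps of transcendental type. Your treatment of the first assertion is therefore only a roadmap of Rickman's modulus-of-path-families strategy, not a proof: the decisive step --- the lower bound of order $q(\log R)^{d-1}/C(d,K)$ for the lifted family, with a constant depending only on $d$ and $K$ --- is precisely the content of the theorem, and you explicitly defer it (``requires a careful combinatorial analysis''). Taken on its own terms, part one of your proposal is an appeal to the literature, which is exactly what the paper does, so no harm there; but it cannot be counted as an argument.

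The reduction you propose for the second assertion has genuine gaps. First, Miniowitz's theorem (Lemma~\ref{lem:Minio}) needs $q+1$ spherically separated omitted values, not $q$; you must adjoin $\infty$, which is legitimate because the $g_n(x)=f(|y_n|x)$ are quasiregular into $\R^d$, but it has to be said. Second, and more seriously, Miniowitz gives you \emph{normality}, which by itself gives no reason for a limit function to be non-constant; Zalcman rescaling, which you also invoke, is a device for \emph{non-normal} families and is in tension with the normality you just established. Any attempt to force non-constancy must use transcendental type, e.g.\ $|g_n(y_n/|y_n|)|=|f(y_n)|\le C$ while $\max_{|x|=\delta}|g_n(x)|=M(\delta|y_n|,f)\to\infty$ for every fixed $\delta>0$; but this very estimate shows that any locally uniform (spherical) limit of a subsequence either is identically $\infty$ or attains the value $\infty$ on \emph{every} sphere about the origin. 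Hence the limit object you describe --- a non-constant $K$-quasiregular map of $\R^d\setminus\{0\}$ into $\R^d\setminus\{a_1,\ldots,a_q\}$ --- cannot arise from this rescaling, the removable-singularity step at $0$ has nothing to act on (there is no boundedness of $g$ near $0$, and invoking a big-Picard-type removability would be circular, since that is essentially what is being proved), and the appeal to part one, which concerns maps into $\R^d\setminus\{a_1,\ldots,a_q\}$, does not apply. The standard repair is to extract the contradiction on a fixed annulus: the $g_n$ omit $a_1,\ldots,a_q,\infty$ there, so they form a normal family, and a Hurwitz-type argument (locally uniform limits of maps omitting $\infty$ either omit $\infty$ or are identically $\infty$) is incompatible with the simultaneous boundedness at one point of the unit sphere and blow-up of the maximum modulus on a nearby sphere; this is Miniowitz's big Picard theorem, and citing it (or \cite{Rickman80}, as the paper does) is the honest route.
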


An immediate consequence of this result is that, for any $K$-quasiregular mapping~$f$ of transcendental type, the set
\[ \mathcal{E}(f)=\{x\in\R^d : f^{-1}(x) \mbox{ is finite} \}  \]
contains at most $q-1$ points.

The definition of quasiregularity can be extended to maps between Riemannian manifolds. In particular, if $\overline{\R^d}=\R^d\cup\{\infty\}$ is equipped with the spherical metric and $U\subset\overline{\R^d}$ is a domain, then a continuous map ${f\colon U\to\overline{\R^d}}$ is called \emph{quasimeromorphic} if $f^{-1}(\infty)$ is discrete and $f$ is quasiregular on ${U\setminus(f^{-1}(\infty)\cup\{\infty\})}$. Miniowitz used Rickman's theorem to prove the following analogue of Montel's theorem. Here we write $\chi(x,y)$ for the chordal distance between two points $x,y\in\overline{\R^d}$.

\begin{lem}[{\cite[Theorem 5]{Miniowitz}}]\label{lem:Minio}
Let $\mathcal{F}$ be a family of $K$-quasimeromorphic functions on a domain $U\subset \R^d$ and let $q=q(d,K)$ be Rickman's constant. If, for some $\varepsilon>0$, each function $f\in\mathcal{F}$ omits $q+1$ values $a_1(f),\ldots, a_{q+1}(f) \in\overline{\R^d}$ such that $\chi(a_i(f), a_j(f))\ge\varepsilon$ for $i\ne j$, then $\mathcal{F}$ is a normal family on $U$.
\end{lem}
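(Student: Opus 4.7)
My plan is to imitate the classical proof of Montel's theorem via Zalcman's rescaling lemma, using Rickman's theorem (Lemma~\ref{lem:Rickman}) as the substitute for Picard's theorem. Suppose for contradiction that $\mathcal{F}$ fails to be normal at some point $x_0\in U$.

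The first step is to establish a quasimeromorphic analogue of Zalcman's lemma: there exist $f_n\in\mathcal{F}$, points $x_n\to x_0$, and positive scalars $\rho_n\to 0$ such that the rescalings $g_n(y) := f_n(x_n+\rho_n y)$ converge, locally uniformly on $\R^d$ with respect to the chordal metric, to a non-constant quasimeromorphic map $g\colon\R^d\to\overline{\R^d}$. Because each affine map $y\mapsto x_n+\rho_n y$ is conformal, each $g_n$ is $K$-quasimeromorphic with the same constant $K$, and a standard compactness argument for quasiregular maps shows that the limit $g$ inherits the bound $K$ on dilatation.

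The second step is a Hurwitz-type argument. By passing to a subsequence, we may assume that each of the $q+1$ omitted values $a_i(f_n)$ converges in $\overline{\R^d}$ to a limit $a_i^*$; the $\varepsilon$-separation in the chordal metric is preserved in the limit, so the $a_i^*$ are distinct. Using the fact that non-constant quasiregular maps are discrete and open (and hence enjoy a degree theory stable under uniform perturbations), one shows that if $g$ attained some $a_i^*$ at $y_0$, then the $g_n$ would attain every point in some fixed neighbourhood of $a_i^*$ (in the chordal metric) on a fixed neighbourhood of $y_0$ for all large $n$. This contradicts $a_i(f_n)\to a_i^*$ being omitted by $g_n$. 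Hence $g$ omits all $q+1$ limit values.

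The final step reduces to Lemma~\ref{lem:Rickman}. At most one of the $a_i^*$ equals $\infty$, so after choosing a M\"obius self-map $\phi$ of $\overline{\R^d}$ that sends one of the omitted values in $\R^d$ to $\infty$ (or taking $\phi$ to be the identity if $\infty$ is already omitted), the composition $\phi\circ g\colon\R^d\to\R^d$ is a non-constant $K$-quasiregular map omitting $q$ distinct finite values, contradicting Rickman's theorem. The main obstacle is the first step: producing a non-constant quasimeromorphic limit from the failure of normality. In the analytic case this rests on Marty's criterion and a careful choice of $\rho_n$ governed by the maximum spherical derivative; in the quasimeromorphic setting one needs an analogous normality criterion for a suitable replacement of the spherical derivative, together with verification that the rescaled family lies in a compact class whose limits remain $K$-quasimeromorphic. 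Once this is in hand, the remaining Hurwitz and M\"obius steps are routine.
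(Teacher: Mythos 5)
You should first note that the paper itself does not prove this lemma at all: it is imported verbatim as Miniowitz's theorem \cite{Miniowitz}, and your overall strategy --- a Zalcman-type rescaling, a Hurwitz-type argument to show the limit omits the limiting values, and a reduction to Rickman's theorem (Lemma~\ref{lem:Rickman}) --- is essentially the route taken in that original source. Your second and third steps are sound: since a non-constant quasiregular limit is discrete and open, the local topological degree is stable under locally uniform convergence, so if $g(y_0)=a_i^*$ then for large $n$ the maps $g_n$ attain all values near $a_i^*$ near $y_0$, in particular the omitted values $a_i(f_n)\to a_i^*$, a contradiction; and composing with a M\"obius self-map of $\overline{\R^d}$ is harmless because such maps are $1$-quasiconformal, so $\phi\circ g$ is still $K$-quasiregular and omits $q$ distinct finite values, contradicting Lemma~\ref{lem:Rickman}.

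The genuine gap is the one you flag yourself: the quasimeromorphic Zalcman lemma in your first step is asserted, not proved, and it is precisely where all of the content lies --- everything after it is routine. In particular, you cannot simply ``imitate the classical proof via Marty's criterion'': a $K$-quasimeromorphic map is differentiable only almost everywhere, so there is no pointwise spherical derivative whose maximum governs the choice of $x_n$ and $\rho_n$. The known argument works instead with the characterisation of normality as local equicontinuity in the chordal metric $\chi$ (Ascoli--Arzel\`a, using compactness of $\overline{\R^d}$), chooses the rescaling from a quantified failure of equicontinuity, and then invokes the convergence/compactness theory of $K$-quasiregular mappings to ensure that the rescaled limit exists, is $K$-quasimeromorphic, and is non-constant; none of this is supplied in your sketch. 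As it stands your argument is therefore a proof scheme conditional on that rescaling lemma. To complete it you must either prove the quasimeromorphic rescaling lemma (the substantial task) or cite it --- and since that lemma is itself one of the main results of \cite{Miniowitz}, citing it is no more economical than citing the normality theorem directly, which is exactly what the paper does.
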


We will make use of the following generalisation to the quasiregular setting of two well-known results about the rate of growth of transcendental entire functions.

\begin{lem}[{\cite[Lemmas 3.3 and 3.4]{B}}]\label{lem:M(Ar)}
Let $f\colon \R^d\to\R^d$ be a quasiregular map of transcendental type and let $A>1$. Then
\[ \lim_{r\to\infty} \frac{M(Ar,f)}{M(r,f)} = \infty \quad \mbox{ and } \quad \lim_{r\to\infty} \frac{\log M(r,f)}{\log r} = \infty. \]
\end{lem}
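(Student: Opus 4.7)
I would prove both limits by contradiction, using rescaling together with the quasiregular version of Montel's theorem (Lemma~\ref{lem:Minio}), and invoking Rickman's theorem (Lemma~\ref{lem:Rickman}) to exploit the transcendental type of $f$.

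The second limit can be deduced from the first by a routine iteration. If $M(Ar,f)/M(r,f)\to\infty$, then for each fixed $\alpha>0$ one has $M(Ar,f)\ge A^{\alpha}M(r,f)$ whenever $r\ge r_0(\alpha)$; iterating gives $M(A^n r_0,f)\ge A^{n\alpha}M(r_0,f)$, and interpolating via the monotonicity of $M(\cdot,f)$ yields $M(r,f)\ge c_\alpha r^\alpha$ for all large $r$. Thus $\liminf_{r\to\infty}\log M(r,f)/\log r\ge\alpha$, and since $\alpha>0$ was arbitrary, $\log M(r,f)/\log r\to\infty$.

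For the first limit, suppose towards a contradiction that $M(Ar_n,f)\le CM(r_n,f)$ for some $C\ge 1$ and some $r_n\to\infty$. Consider the $K$-quasiregular rescalings
\[ g_n(x)=\frac{f(r_n x)}{M(r_n,f)},\qquad x\in B(0,A), \]
which satisfy $|g_n|\le C$ on $B(0,A)$ and $\max_{|x|=1}|g_n|=1$, while $g_n(0)=f(0)/M(r_n,f)\to 0$ as $M(r_n,f)\to\infty$. Since $\{g_n\}$ omits every value of $\overline{\R^d}\setminus\overline{B(0,C)}$---in particular $q+1$ chordally separated values together with~$\infty$---Lemma~\ref{lem:Minio} furnishes a subsequence $g_{n_k}\to g$ locally uniformly on $B(0,A)$, whose limit $g$ is a non-constant $K$-quasiregular map into $\overline{B(0,C)}$ with $g(0)=0$.

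The main obstacle, and the step I expect to be hardest, is extracting a contradiction from the existence of such a limit~$g$, since the three-circle convexity of $\log M$ used in the entire case fails in the quasiregular setting (cf.\ \cite{BDF}). My plan is to pick a value $a\notin\mathcal{E}(f)$, so that by Rickman's theorem $f^{-1}(a)$ is infinite, and then to show via a pigeonhole argument on the dyadic annuli $\{A^{k}r_n\le|x|<A^{k+1}r_n\}$ that one can arrange infinitely many rescaled preimages of $a$ to lie in a compact subset $K\subset B(0,A)$. Since $g_n$ takes the value $a/M(r_n,f)\to 0$ at each such rescaled preimage, applying a Hurwitz-type theorem for quasiregular maps to the convergent subsequence $g_{n_k}\to g$ produces infinitely many zeros of $g$ inside~$K$, which contradicts the discreteness of $g^{-1}(0)$ for a non-constant quasiregular map. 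The delicate technical point is preventing the rescaled preimages from escaping to the boundary sphere $\{|x|=A\}$, where the limit argument would no longer produce a contradiction; this is where the pigeonhole (and possibly a slight shrinking of $A$ to gain some room) must be handled carefully.
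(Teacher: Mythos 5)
The paper itself offers no proof of this lemma (it is quoted from \cite[Lemmas 3.3 and 3.4]{B}), so the comparison is with your argument on its own terms. Your skeleton is sound and in the spirit of how such growth lemmas are proved in the quasiregular literature: the deduction of the second limit from the first by iteration and monotonicity of $M(\cdot,f)$ is correct, and the rescalings $g_n(x)=f(r_nx)/M(r_n,f)$, normality via Lemma~\ref{lem:Minio}, and the normalisations $g(0)=0$, $\max_{|x|=1}|g|=1$ forcing a non-constant $K$-quasiregular limit are all fine (the fact that a locally uniform limit of $K$-quasiregular maps is $K$-quasiregular or constant is standard, e.g.\ Rickman's book). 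Moreover, the point you flag as the hardest -- keeping rescaled preimages away from $\{|x|=A\}$ by a pigeonhole over dyadic annuli -- is a non-issue: pick $a\notin\mathcal{E}(f)$, so $f^{-1}(a)$ is infinite by Lemma~\ref{lem:Rickman}; then the number of points of $f^{-1}(a)$ of modulus at most $r_n$ tends to infinity automatically, and after rescaling these all lie in $\overline{B(0,1)}$, a fixed compact subset of $B(0,A)$, where they solve $g_n(x)=a/M(r_n,f)$ with right-hand side tending to $0$.

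The genuine gap is in your final step: a Hurwitz-type argument does \emph{not} produce infinitely many zeros of $g$, and the contradiction cannot be with discreteness of $g^{-1}(0)$. The many near-zero points of $g_{n_k}$ may coalesce in the limit, and in fact they visibly do: for any fixed $y\in f^{-1}(a)$ the rescaled point $y/r_{n_k}$ tends to $0$, so in the limit you typically see only the single zero of $g$ at the origin. The correct contradiction is quantitative, via local index and topological degree (the multiplicity form of Hurwitz): since $g$ is non-constant and quasiregular, $g^{-1}(0)\cap\overline{B(0,1)}$ is finite and each zero has finite local index, with total index $\mu<\infty$; choose small disjoint balls around these zeros on whose boundaries $|g|\ge\delta>0$. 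Uniform convergence on $\overline{B(0,\rho)}$ for some $\rho\in(1,A)$ shows that, for large $k$, every solution of $g_{n_k}(x)=a/M(r_{n_k},f)$ in $\overline{B(0,1)}$ lies in those balls, and stability of the degree under this convergence bounds the number of such solutions (even counted with index) by $\mu$. This contradicts the fact that the number of rescaled $a$-points in $\overline{B(0,1)}$ is unbounded, and completes the proof. So replace ``infinitely many zeros, contradicting discreteness'' by ``solution count bounded by the finite total local index, contradicting the unbounded number of preimages''; with that repair your argument closes.
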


We note the following facts about the Julia set of a quasiregular mapping, as defined in the introduction.

\begin{lem}[\cite{BN}]\label{lem:Julia1}
Let $f\colon\R^d\to\R^d$ be a quasiregular mapping of transcendental type. Then the Julia set $J(f)$ is non-empty. In fact, $J(f)$ is infinite. The Julia set is completely invariant; that is, $x\in J(f)$ if and only if $f(x)\in J(f)$.
\end{lem}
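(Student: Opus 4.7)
The plan is to prove the three statements in turn: complete invariance first (as it is essentially set-theoretic and is used for infiniteness), then non-emptiness (the main work), and finally infiniteness.

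For complete invariance I would use two facts: every non-constant quasiregular mapping is an open map, and the definition of $J(f)$ admits an equivalent reformulation in which $\R^d\setminus\bigcup_{n\ge N}f^n(U)$ is required to have capacity zero for every $N$ and every neighbourhood $U$ of $x$ (a fact that one verifies separately, using that $f$ does not collapse positive capacity). The backward direction is then immediate: if $U$ is a neighbourhood of $x$, then $f(U)$ is a neighbourhood of $f(x)$ by openness, and
\[ \bigcup_{n \ge 1} f^n(U) \supset \bigcup_{m \ge 1} f^m(f(U)), \]
whose complement has capacity zero. For the forward direction, given a neighbourhood $V$ of $f(x)$, choose by continuity a neighbourhood $U$ of $x$ with $f(U) \subset V$, so that $\bigcup_{n \ge 2} f^n(U) \subset \bigcup_{n \ge 1} f^n(V)$; the shifted form of the definition applied at $x$ gives the required cap-zero property at $f(x)$.

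For non-emptiness I would argue by contradiction: assume $J(f) = \emptyset$, so each $x \in \R^d$ admits a neighbourhood $U_x$ for which $\R^d \setminus \bigcup_{n \ge 1} f^n(U_x)$ has positive conformal capacity. Such a set is necessarily uncountable, so it contains $q+1$ points with a positive lower bound on pairwise chordal distance, where $q = q(d, K)$ is Rickman's constant. Since every $f^n|_{U_x}$ omits these points, Miniowitz's theorem (Lemma~\ref{lem:Minio}) yields that $\{f^n\}$ is a normal family on $U_x$; covering any ball by finitely many such $U_x$'s gives normality of $\{f^n\}$ on all of $\R^d$ in the spherical metric. To produce a contradiction I would take an escaping point $y_0 \in I(f)$ (non-empty by \cite{BFLM}), extract a subsequence $f^{n_k}$ converging locally uniformly to a quasimeromorphic limit $g$ with $g(y_0) = \infty$, and deduce $f^{n_k} \to \infty$ uniformly on a neighbourhood $U'$ of $y_0$; combining Lemma~\ref{lem:Rickman} with the fast growth from Lemma~\ref{lem:M(Ar)} should then produce points in $U'$ whose $f^{n_k}$-images stay bounded. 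Pinning down this last contradiction, using only the tools in the excerpt, is the main obstacle.

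For infiniteness, given that $J(f) \ne \emptyset$, I would pick any $z_0 \in J(f) \setminus \mathcal{E}(f)$. Then $f^{-1}(z_0)$ is infinite by Lemma~\ref{lem:Rickman}, and $f^{-1}(z_0) \subset J(f)$ by complete invariance, so $J(f)$ is infinite. The existence of such a $z_0$ is automatic once $|J(f)| \ge q$ since $|\mathcal{E}(f)| \le q-1$; the residual case, where $J(f)$ is finite and entirely contained in $\mathcal{E}(f)$, must be ruled out separately. This can be done using the blow-up property implicit in the definition: for any neighbourhood $U$ of a point of $J(f)$, $\bigcup_{n\ge 1}f^n(U)$ has cap-zero complement and so cannot be contained in any finite set, forcing accumulation of $J(f)$-orbits outside the finite set $\mathcal{E}(f)$.
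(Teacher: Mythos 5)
There is a genuine gap, and it is in the central step. The paper does not prove this lemma at all: it is quoted from [BN], where non-emptiness and infiniteness of $J(f)$ constitute the main theorem, proved by a lengthy construction (a dichotomy according to the pits effect, plus covering lemmas of exactly the kind reproduced in Section~3 of the present paper). Your proposed shortcut for non-emptiness---``each $f^n|_{U_x}$ omits $q+1$ fixed points of a positive-capacity set, so $\{f^n\}$ is normal by Lemma~\ref{lem:Minio}''---fails because Lemma~\ref{lem:Minio} applies only to a family of $K$-quasimeromorphic maps with a \emph{common} $K$, whereas the iterates of a $K$-quasiregular map satisfy only $K(f^n)\le K(f)^n$; unless $f$ is uniformly quasiregular, $\{f^n\}$ is not a family to which Rickman--Miniowitz normality arguments apply. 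This is precisely the obstruction that forces the capacity-based (blow-up) definition of $J(f)$ used here and in [BN], rather than a non-normality definition, and it is why non-emptiness cannot be obtained by a soft Montel-type argument. Even granting normality, you acknowledge yourself that the final contradiction (via an escaping point and Lemma~\ref{lem:M(Ar)}) is not pinned down, so the non-emptiness part of the proposal is not a proof.

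The peripheral parts are closer to being right but still need care. The complete invariance argument is essentially the standard one (and matches [BN] in spirit): the backward direction uses only openness, while the forward direction needs the ``shifted'' form of the definition, which requires knowing that the image of a capacity-zero set under a non-constant quasiregular map has capacity zero and that, by Lemma~\ref{lem:Rickman}, $f$ omits at most finitely many values; your phrase ``does not collapse positive capacity'' is the wrong direction of that implication, though the needed fact is true. For infiniteness, the main case (pick $z_0\in J(f)\setminus\mathcal{E}(f)$ and pull back) is fine, but the residual case---$J(f)$ finite and contained in $\mathcal{E}(f)$---is only waved at: the blow-up property says the complement of $\bigcup_n f^n(U)$ has capacity zero, and since $J(f)$ would be a finite (hence capacity-zero) set, nothing immediate forces orbit accumulation outside $\mathcal{E}(f)$; ruling this out needs a genuine argument. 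In [BN] infiniteness is not deduced from non-emptiness in this way at all, but comes out of the same construction that produces Julia points in annuli near infinity.
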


The next lemma tells us that a collection of domains must meet the Julia set if the image of each domain in the collection contains many of the others. We denote the closure of a set $E$ by $\overline{E}$.

\begin{lem}\label{lem:Julia2}
Let $f\colon\R^d\to\R^d$ be a $K$-quasiregular map of transcendental type and let $p\in\N$ be such that $p>K_I(f)+q$, where $q=q(d,K)$ is Rickman's constant. Suppose that $U_1,\ldots,U_p$ are pairwise disjoint bounded domains such that, for each $j\in\{1,\ldots,p\}$,
\[ f(U_j) \supset U_i \quad \mbox{ for at least } p-q \mbox{ values of } i\in\{1,\ldots,p\}. \]
Then $\overline{U_j}\cap J(f)\ne\emptyset$ for every $j\in\{1,\ldots,p\}$.
\end{lem}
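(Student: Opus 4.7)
The plan is to argue by contradiction: suppose $\overline{U_{j_0}} \cap J(f) = \emptyset$ for some index $j_0 \in \{1,\ldots,p\}$. I would then establish the contradiction in three steps, using normality together with a lifting of itineraries provided by the covering hypothesis.

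First I would show that the family $\{f^n\}_{n \ge 1}$ is normal on an open neighbourhood of $\overline{U_{j_0}}$. For each $x \in \overline{U_{j_0}}$, since $x \notin J(f)$ there is an open neighbourhood $V_x$ of $x$ for which $\overline{\R^d} \setminus \bigcup_{n} f^n(V_x)$ has positive conformal capacity. This complement contains $\infty$ together with at least $q$ points of $\R^d$ pairwise separated by a positive chordal distance, so Miniowitz's theorem (Lemma~\ref{lem:Minio}) yields that $\{f^n\}$ is normal on $V_x$. A finite subcover of the compact set $\overline{U_{j_0}}$ gives an open neighbourhood $W$ of $\overline{U_{j_0}}$ on which $\{f^n\}$ is normal; I would extract a subsequence $f^{n_k}$ converging spherically locally uniformly on $W$ to a quasimeromorphic limit $F\colon W \to \overline{\R^d}$.

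Next I would use the covering hypothesis to lift itineraries. Call a sequence $(j_0, j_1, j_2, \ldots)$ \emph{admissible} if $f(U_{j_n}) \supset U_{j_{n+1}}$ for every $n$. Since each $U_j$ has at least $p-q \ge K_I(f)+1 \ge 2$ admissible successors, admissible itineraries are abundant. By induction on $N$, using only continuity of $f$ together with the covering hypothesis, the open set $E_N = \{x \in U_{j_0} : f^n(x) \in U_{j_n} \text{ for } 0 \le n \le N\}$ is non-empty and satisfies $f^N(E_N) = U_{j_N}$: given this at step $N$, every $y \in U_{j_{N+1}} \subset f(U_{j_N}) = f(f^N(E_N))$ lies in $f^{N+1}(E_N)$, whence $E_{N+1} \neq \emptyset$ and $f^{N+1}(E_{N+1}) = U_{j_{N+1}}$. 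A compactness argument on the nested closures $\overline{E_N}$ then produces points of $\overline{U_{j_0}}$ realising any complete admissible itinerary, and in particular yields the key inclusion $f^N(\overline{U_{j_0}}) \supset \overline{U_{j_N}}$ for every admissible $j_N$.

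The final step derives a contradiction from normality. If $F \equiv \infty$ on the component of $W$ containing $\overline{U_{j_0}}$, then $f^{n_k}(\overline{U_{j_0}})$ must lie outside any prescribed ball for large $k$, yet the itinerary lifting forces $f^{n_k}(\overline{U_{j_0}}) \supset \overline{U_{j_{n_k}}}$, a bounded set; contradiction. Otherwise, $F$ is not identically infinite, so $F(\overline{U_{j_0}})$ is a fixed compact subset of $\overline{\R^d}$, and Hausdorff convergence $f^{n_k}(\overline{U_{j_0}}) \to F(\overline{U_{j_0}})$ forces $F(\overline{U_{j_0}}) \supset \overline{U_i}$ for every $i$ reachable from $j_0$ via admissible itineraries. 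Applying the same argument to the shifted subsequence $f^{n_k+m} \to f^m \circ F$ for each $m$ and using the transcendental-type growth $M^m(r,f)\to\infty$ from Lemma~\ref{lem:M(Ar)}, one reaches an iterate whose image cannot simultaneously be the bounded normal limit and contain the required $\overline{U_i}$'s, completing the contradiction.

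The main obstacle is precisely this finite-$F$ case: the $F\equiv\infty$ collision is immediate, but excluding a finite normal limit compatible with the itinerary structure requires careful bookkeeping of which indices are reachable, and it is here that the numerical hypothesis $p > K_I(f)+q$ — reflecting the bound on the local index of $K$-quasiregular maps in terms of $K_I(f)$ — provides the branching needed to force unboundedness when combined with Lemma~\ref{lem:M(Ar)}. A secondary, more routine, point of care is the initial inductive verification of the itinerary lifting, which must be carried out using only the openness and continuity of the quasiregular map $f$, without appealing to covering-map structure.
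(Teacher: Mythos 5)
Your opening step is where the argument breaks down, and it is not repairable by the route you take. You apply Miniowitz's theorem (Lemma~\ref{lem:Minio}) to the family of iterates $\{f^n\}$, but that lemma requires a family of $K$-quasimeromorphic maps with a \emph{uniform} dilatation bound, whereas $f^n$ is in general only $K^n$-quasiregular: the dilatations of the iterates blow up, since $f$ is not assumed uniformly quasiregular. Consequently the implication ``$x\notin J(f)$ $\Rightarrow$ $\{f^n\}$ is normal near $x$'' is unjustified here -- indeed, the capacity definition of $J(f)$ used in this paper exists precisely because the non-normality characterisation is unavailable in this setting. (Note that everywhere else in the paper Lemma~\ref{lem:Minio} is applied only to rescalings $x\mapsto f(rx)/M(r,f)$ of the single map $f$, which all share the same $K$.) The same problem recurs when you extract a quasimeromorphic limit $F$ of $f^{n_k}$. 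The paper's own proof avoids normal families entirely: it invokes Theorem~3.2 of \cite{Bergw2013} together with the argument on p.~161 of \cite{BN}, which is a capacity estimate based on the $K_I$-inequality for moduli of path families; that argument shows directly that the complement of $\bigcup_n f^n(U)$ has zero capacity for appropriate neighbourhoods, which is what membership in $J(f)$ means by definition, and it is exactly there that the hypothesis $p>K_I(f)+q$ does its work.

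Even granting normality for the sake of argument, your final step does not produce a contradiction. In the finite-limit case, the inclusions $f^{n_k+m}(\overline{U_{j_0}})\supset\overline{U_{i}}$ for various bounded sets $\overline{U_i}$ are perfectly compatible with the limits $f^m\circ F$ being finite and bounded on $\overline{U_{j_0}}$; the growth $M^m(r,f)\to\infty$ from Lemma~\ref{lem:M(Ar)} concerns the maximum modulus on spheres and gives no lower bound for $|f^m|$ on the fixed compact set $F(\overline{U_{j_0}})$, so ``one reaches an iterate whose image cannot simultaneously be the bounded normal limit and contain the required sets'' is an assertion, not an argument -- and it is precisely the heart of the lemma. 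Relatedly, the hypothesis $p>K_I(f)+q$ is never genuinely used in your proof (its role is not a bound on local index, but the $K_I$-inequality in the covering/capacity argument of \cite{Bergw2013}), and your Hausdorff-convergence step only yields $\overline{U_i}\subset F(\overline{U_{j_0}})$ for indices $i$ reachable at the specific times $n_k$ along a further subsequence, not for every reachable $i$. The itinerary-lifting induction in your second step is fine (it is essentially Lemma~\ref{lem:En}), but the two gaps above are fatal to the overall strategy.
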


\begin{proof}
This can be deduced from \cite[Theorem~3.2]{Bergw2013} and the definition of the Julia set $J(f)$. The argument is exactly the same as that used on page~161 of~\cite{BN} in the ``Proof of Theorem~1.1 for functions without the pits effect'' (simply replace the $g_m$ and $N$ referred to there by $f$ and $p$ respectively).
\end{proof}

The final result in this section is essentially \cite[Lemma~1]{RSSlow}.

\begin{lem}\label{lem:En}
Let $f\colon\R^d\to\R^d$ be a continuous function and let $(E_n)$ be a sequence of non-empty bounded sets in $\R^d$ such that
\[ f(E_n) \supset E_{n+1} \quad \mbox{ for } n\ge 0. \]
Then there exists $\zeta\in\R^d$ such that $f^n(\zeta)\in\overline{E_n}$ for $n\ge0$.

If $f$ is also quasiregular of transcendental type and there is a subsequence $(E_{n_k})$ such that $\overline{E_{n_k}}\cap J(f)\ne\emptyset$, then the point $\zeta$ may be chosen to lie in $J(f)$.
\end{lem}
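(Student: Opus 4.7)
The proof splits naturally into two parts, both driven by a backward-iteration argument combined with compactness of $\overline{E_0}$. For the unconditional first claim, fix $m\ge 1$: the hypothesis $f(E_n)\supset E_{n+1}$ lets me start from an arbitrary $y_m\in E_m$ and successively pick preimages $y_{m-1}\in E_{m-1},\ldots,y_0\in E_0$ with $f(y_k)=y_{k+1}$. Writing $z^{(m)}$ for the resulting base point $y_0$, we have $f^k(z^{(m)})\in E_k$ for $0\le k\le m$. Since $E_0$ is bounded, $(z^{(m)})$ lies in the compact set $\overline{E_0}$, so it has a subsequence $z^{(m_j)}\to \zeta\in\overline{E_0}$. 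For each fixed $k$ and every sufficiently large $m_j$ in the subsequence, $f^k(z^{(m_j)})\in E_k$, and continuity of $f^k$ gives $f^k(\zeta)\in\overline{E_k}$, as required.

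For the strengthened conclusion the idea is to arrange, for each $k$, that the backward chain passes through a specific point of $J(f)$ at step $n_k$. The main technical hiccup I would expect here is that a point of $\overline{E_{n_k}}\cap J(f)$ need not itself lie in $E_{n_k}$, so one cannot directly apply the hypothesis to pull it back. I would resolve this by a compactness upgrade of the given inclusion: since $\overline{E_n}$ is compact, $f(\overline{E_n})$ is closed and contains $E_{n+1}$, hence contains $\overline{E_{n+1}}$. Using this stronger inclusion, I pick some $z_k\in\overline{E_{n_k}}\cap J(f)$ and produce $y_k\in\overline{E_0}$ with $f^{n_k}(y_k)=z_k$ and $f^j(y_k)\in\overline{E_j}$ for $0\le j\le n_k$. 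The complete invariance of $J(f)$ (Lemma~\ref{lem:Julia1}) then forces $y_k\in J(f)$.

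Extracting a convergent subsequence $y_k\to\zeta\in\overline{E_0}$ and invoking continuity of $f^j$ exactly as in the first part yields a single point $\zeta$ with $f^j(\zeta)\in\overline{E_j}$ for every $j\ge 0$. To conclude $\zeta\in J(f)$ one needs $J(f)$ to be closed, which is immediate from its defining property in the introduction: if some neighbourhood $U$ of $x$ witnesses that $x\notin J(f)$, then the same $U$ is a neighbourhood of every point it contains, so the complement of $J(f)$ is open. Beyond this closedness observation and the $\overline{E_n}$-versus-$E_n$ distinction resolved in the pullback step, the argument is routine bookkeeping; no quasiregular machinery beyond complete invariance of $J(f)$ is invoked.
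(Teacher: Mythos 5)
Your proof is correct, and it takes a mildly different route from the paper's. For the first claim the paper upgrades the hypothesis to $f(\overline{E_n})\supset\overline{E_{n+1}}$ (continuity plus compactness) and then intersects the nested compact sets $F_n=\{x\in\overline{E_0}: f(x)\in\overline{E_1},\ldots,f^n(x)\in\overline{E_n}\}$, whereas you build finite backward orbits $z^{(m)}$ and pass to a subsequential limit; these are interchangeable formulations of the same compactness argument, and your version does not even need the closure upgrade at this stage. For the Julia-set refinement the paper argues differently: it uses complete invariance to show $\overline{E_n}\cap J(f)\ne\emptyset$ for \emph{every} $n$ (pulling Julia points in $\overline{E_{n_k}}$ back along the chain, just as you do) and then re-applies the first part of the lemma to the sets $\overline{E_n}\cap J(f)$, while you instead pull back a chosen $z_k\in\overline{E_{n_k}}\cap J(f)$ all the way to a point $y_k\in\overline{E_0}\cap J(f)$ with a controlled orbit up to time $n_k$ and take a subsequential limit. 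Your route must (and does) verify that the covering relation persists on closures and that $J(f)$ is closed — which you rightly check straight from the capacity definition, a point the paper also needs implicitly but does not spell out; the paper's route instead leaves to the reader the check that $f(\overline{E_n}\cap J(f))\supset\overline{E_{n+1}}\cap J(f)$, which again follows from complete invariance. In short, the ingredients (closure upgrade, complete invariance, compactness of $\overline{E_0}$, closedness of $J(f)$) are identical; the paper's packaging via nested intersections and reuse of part one is a little slicker, while your sequential construction is more explicit and self-contained.
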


\begin{proof}
The continuity of $f$ and compactness of $\overline{E_n}$ imply that $f(\overline{E_n}) \supset \overline{E_{n+1}}$ for $n\ge 0$. Thus the sets
\[ F_n = \{x\in \overline{E_0} : f(x)\in\overline{E_1},\ldots, f^n(x)\in\overline{E_n} \} \]
are non-empty, compact and form a decreasing nested sequence. Hence the intersection $F=\bigcap_{n\ge0} F_n$ is non-empty and any $\zeta\in F$ satisfies $f^n(\zeta)\in\overline{E_n}$ for $n\ge0$.

To prove the second part of the lemma, we use the fact that the Julia set $J(f)$ is completely invariant under $f$. Using this fact, we deduce first that $\overline{E_{n}}\cap J(f)\ne\emptyset$ for every $n\ge 0$. The complete invariance of $J(f)$ then allows us to apply the first part of the lemma to the sets $\overline{E_{n}}\cap J(f)$, which yields $\zeta\in J(f)$ as required.
\end{proof}

\section{Proof of Theorem~\ref{thm1}}

\subsection{A holding-up lemma}\label{sect:holdup}

Theorem~\ref{thm1} will be proved by using Lemma~\ref{lem:En} to produce a point in $I(f)\cap J(f)$. In order to control the speed of escape to infinity, a `holding-up' technique similar to that of \cite[Lemma~6]{RSSlow} will be used to construct an escaping point whose forward iterates are repeatedly made to wait in some set for a prescribed number of steps. The holding-up part of the argument is contained in the next lemma. The actual construction of sets with the required covering properties under $f$ will be the subject of the remainder of the proof.

\begin{lem}\label{lem:holdup}
Let $f$ be an entire quasiregular map of transcendental type and, for $\nu\in\N$ and $j\in\{1,\ldots,p\}$, let $A_\nu^{(j)}$ be non-empty bounded sets such that the unions $A_{\nu}=\bigcup_{j=1}^p A_\nu^{(j)}$ tend to infinity in the sense that
\eqn
\lim_{\nu\to\infty} \inf \{|x|:x\in A_\nu\} = \infty. \label{Ha}
\eqnend
Suppose further that
\begin{multline}\label{Hb}
	\mbox{for each } \nu\in\N \mbox{ and } j\in\{1\ldots,p\}, \\ \mbox{ we have }  f(A_\nu^{(j)}) \supset A_{\nu+1}^{(i)}  \mbox{ for some } i\in\{1,\ldots,p\},
\end{multline}
and that there exists a strictly increasing sequence of integers $(\nu_k)$ such that
\begin{multline}\label{Hc}
	\mbox{for each } k\in\N \mbox{ and } j\in\{1\ldots,p\}, \\ \mbox{ we have }  f(A_{\nu_k}^{(j)}) \supset A_{\nu_k}^{(i)}  \mbox{ for some } i\in\{1,\ldots,p\}
\end{multline}
and that, for every $j\in\{1,\ldots,p\}$,
\eqn
\overline{A_{\nu_k}^{(j)}} \cap J(f) \ne \emptyset. \label{Hd}
\eqnend
Then, given any positive sequence $a_n\to\infty$, there exist $\zeta\in J(f)$ and $N_1\in\N$ such that $|f^n(\zeta)|\to\infty$ as $n\to\infty$, but also $|f^n(\zeta)|\le a_n$ for $n\ge N_1$.
\end{lem}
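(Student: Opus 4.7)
The plan is to apply Lemma~\ref{lem:En} to a sequence $(E_n)$ drawn from the sets $A_\nu^{(j)}$, designed to alternate between long ``holding-up'' phases at the special levels $\nu_k$ and short ``advancing'' phases between them. Write $M_\nu=\sup\{|x|:x\in A_\nu\}$ (finite since $A_\nu$ is bounded) and $m_\nu=\inf\{|x|:x\in A_\nu\}$, noting that $m_\nu\to\infty$ by (\ref{Ha}). If we arrange that $E_n\subset A_{\nu(n)}$ for some level sequence $\nu(n)$, then the two-sided bound $m_{\nu(n)}\le|f^n(\zeta)|\le M_{\nu(n)}$ reduces the whole lemma to controlling the sequence $\nu(n)$.

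I would construct $(E_n)$ inductively using start-times $S_1<S_2<\cdots$ and hold-durations $H_k$, related by $S_{k+1}=S_k+H_k+(\nu_{k+1}-\nu_k)$. During a hold, $n\in[S_k,S_k+H_k)$, fix the level at $\nu_k$: if $E_n=A_{\nu_k}^{(j)}$, use (\ref{Hc}) to pick $i$ with $f(A_{\nu_k}^{(j)})\supset A_{\nu_k}^{(i)}$ and set $E_{n+1}=A_{\nu_k}^{(i)}$. During an advance, $n\in[S_k+H_k,S_{k+1})$, use (\ref{Hb}) in the same way to step the level up by one at each step, arriving at some $A_{\nu_{k+1}}^{(j)}$ at time $S_{k+1}$. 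This yields $f(E_n)\supset E_{n+1}$ for all $n\ge 0$, so the first part of Lemma~\ref{lem:En} produces $\zeta$ with $f^n(\zeta)\in\overline{E_n}$.

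The schedule must keep the orbit below $a_n$. For $n\in[S_k,S_{k+1})$ the level $\nu(n)\in\{\nu_k,\nu_k+1,\ldots,\nu_{k+1}\}$, so $|f^n(\zeta)|\le M_{\nu_{k+1}}$. Since $a_n\to\infty$, I inductively pick $H_k$ so large that $a_n\ge M_{\nu_{k+1}}$ for all $n\ge S_k+H_k$; this bounds $|f^n(\zeta)|\le a_n$ throughout the transition to $\nu_{k+1}$ and the subsequent hold. Taking $N_1$ to be a time after which $a_n\ge M_{\nu_{k_0}}$ (where $\nu_{k_0}$ is the initial level, chosen with $S_1\ge N_1$) delivers the required upper bound for $n\ge N_1$; meanwhile $\nu(n)\to\infty$ and (\ref{Ha}) force $|f^n(\zeta)|\ge m_{\nu(n)}\to\infty$. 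Finally, to place $\zeta$ in $J(f)$ I invoke the second part of Lemma~\ref{lem:En}: at each $n=S_k$ the set $E_{S_k}$ has the form $A_{\nu_k}^{(j)}$, so (\ref{Hd}) supplies $\overline{E_{S_k}}\cap J(f)\ne\emptyset$, giving the needed subsequence. The main obstacle is precisely the scheduling around transitions, where the modulus could briefly overshoot $a_n$ if the advance begins before $a_n$ has grown past $M_{\nu_{k+1}}$; extending each $H_k$ until $a_n$ outgrows $M_{\nu_{k+1}}$ resolves this, and is always possible because $a_n\to\infty$.
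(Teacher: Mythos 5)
Your construction is essentially the paper's own argument: schedule a walk through the sets $A_\nu^{(j)}$ that pauses at the levels $\nu_k$ (via \eqref{Hc}) long enough for $a_n$ to catch up, advances one level per step via \eqref{Hb}, and then apply Lemma~\ref{lem:En}, using \eqref{Hd} at the pause times to place $\zeta$ in $J(f)$. One step as written is not justified by the hypotheses: you bound $|f^n(\zeta)|\le M_{\nu_{k+1}}$ for all $n\in[S_k,S_{k+1})$, but the per-level suprema $M_\nu=\sup\{|x|:x\in A_\nu\}$ are not assumed monotone in $\nu$, so an intermediate level visited during the advance could have $M_\nu>M_{\nu_{k+1}}$. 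The repair is immediate: choose $H_k$ so large that $a_n\ge\max\{M_\nu:\nu_k<\nu\le\nu_{k+1}\}$ for all $n\ge S_k+H_k$; the paper sidesteps the issue by working throughout with the cumulative supremum $\rho_\nu=\sup\{|x|:x\in A_1\cup\cdots\cup A_\nu\}$, which is increasing by construction. The bookkeeping around $S_1$, $k_0$ and $N_1$ is also slightly garbled, but the intent --- hold at level $\nu_1$ from time $0$ and only assert the bound once $a_n\ge M_{\nu_1}$ --- is sound, and it even buys a small simplification over the paper: since your $E_n$ are defined for all $n\ge 0$, Lemma~\ref{lem:En} gives $f^n(\zeta)\in\overline{E_n}$ directly, whereas the paper starts its sets at time $N_1$ and must then pull the point back $N_1$ steps using Lemma~\ref{lem:Rickman} (avoiding $\mathcal{E}(f)$) together with the complete invariance of $J(f)$.
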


\begin{proof}
Define an increasing real sequence $(\rho_\nu)$ by
\eqn
 \rho_\nu = \sup \left\{|x| : x\in A_1 \cup \ldots \cup A_\nu \right\}. \label{defn_rho}
\eqnend
Now choose a strictly increasing sequence of positive integers $N_k$ such that, for all $n\ge N_k$, we have
\[ \rho_{\nu_k} \le a_n. \]
This is possible because the sequence $(a_n)$ tends to infinity. Next, for every integer $n\ge N_1$ we inductively define an integer $\mu(n)$ as follows. Set $\mu(N_1)=\nu_1$. Then, for every $n\ge N_1$, we have $N_k\le n < N_{k+1}$ for some $k$ and we define
\[ \mu(n+1) = \begin{cases} \mu(n)+1, & \mbox{if } \mu(n)<\nu_k; \\
\mu(n), & \mbox{otherwise}. \end{cases} \]
Note that the latter case occurs if and only if $\mu(n)=\nu_k$. Typically, the sequence $(\mu(n))_{n\ge N_1}$ looks something like
\[ \nu_1, \ldots, \nu_1, \nu_1+1, \nu_1+2, \ldots, \nu_2, \nu_2, \ldots, \nu_2, \nu_2+1, \ldots, \nu_3, \nu_3, \ldots; \]
that is, it counts up through the integers but may pause for a finite number of steps at each value $\nu_k$. In particular, $\mu(n)\to\infty$ as $n\to\infty$.

We now claim that
\eqn
\rho_{\mu(n)} \le a_n \quad \mbox{ for all } n\ge N_1; \label{an>p}
\eqnend
that is, we have held up the sequence $\rho_{\mu(n)}$ so that it grows more slowly than the $a_n$. To prove the claim, let $n\ge N_1$ and find $k$ such that $N_k\le n < N_{k+1}$. Then $\rho_{\nu_k} \le a_n$ by the definition of $N_k$. The way $\mu(n)$ was chosen means that $\mu(n)\le\nu_k$ and so, as $(\rho_\nu)$ is increasing, we have that $\rho_{\mu(n)} \le \rho_{\nu_k} \le a_n$, which proves \eqref{an>p}.

From hypotheses \eqref{Hb} and \eqref{Hc} it follows that, for all $n\ge N_1$ and ${j\in\{1,\ldots,p\}}$, we have
\[  f\left(A_{\mu(n)}^{(j)}\right) \supset A_{\mu(n+1)}^{(i)} \mbox{ for some } i\in\{1,\ldots,p\}. \]
Here we have used the fact that either $\mu(n+1)=\mu(n)+1$ or else $\mu(n+1)=\mu(n)=\nu_k$ for some $k$. Therefore, for all $n\ge N_1$, we can choose $E_n$ to be one of the sets $A_{\mu(n)}^{(1)},\ldots,A_{\mu(n)}^{(p)}$ in such a way that $f(E_n)\supset E_{n+1}$. Combining this with \eqref{Hd}, an application of Lemma~\ref{lem:En} now yields a point $\zeta_{N_1}\in J(f)$ such that $f^{n-N_1}(\zeta_{N_1})\in\overline{E_n}$ for $n\ge N_1$. Without loss of generality, we may assume that $\zeta_{N_1}\notin\mathcal{E}(f)$ (else increase $N_1$) and so applying Lemma~\ref{lem:Rickman} finitely many times we can find $\zeta$ such that $f^{N_1}(\zeta)=\zeta_{N_1}$. Since the Julia set is completely invariant, we have $\zeta\in J(f)$. Moreover, we have that $f^n(\zeta)\in\overline{E_n}\subset \overline{A_{\mu(n)}}$ for $n\ge N_1$ and therefore
\[ |f^n(\zeta)| \le \rho_{\mu(n)} \le a_n \]
by \eqref{defn_rho} and \eqref{an>p}, but also $|f^n(\zeta)|\to\infty$ as $n\to\infty$ by \eqref{Ha}.
\end{proof}

\subsection{The pits effect}\label{sect:pits}

In view of Lemma~\ref{lem:holdup}, to prove Theorem~\ref{thm1} it will suffice to construct sets with the properties \eqref{Ha}--\eqref{Hd}. This will be carried out in two distinct cases, depending on whether or not the transcendental type quasiregular mapping in question has the ``pits effect''. The same division of cases is central to the proof of the non-emptiness of the Julia set in \cite{BN} and to other results proved there. The definition, exactly as given in \cite[Definition~1.2]{BN}, is as follows.

\begin{defn}
A quasiregular map $f\colon \R^d\to \R^d$ of transcendental type is said to have the \emph{pits effect}
if there exists $N\in \N$ such that, for all $c>1$ and all $\varepsilon>0$, there exists $R_0$ such that if $R>R_0$, then
\[ \{x\in \R^d:R\le |x|\le cR,\ |f(x)|\le 1 \}\]
can be covered by $N$ balls of radius $\varepsilon R$.
\end{defn}

The following result from \cite{BN} uses a quasiregular version of Harnack's inequality (see Lemma~\ref{lem:Harnack} below) to show that an equivalent definition of the pits effect can be given. Namely, the condition that $|f(x)|\le 1$ only within certain small `pits' can be replaced by a condition that, at least initially, appears stronger.

\begin{lem}[{\cite[Theorem~8.1]{BN}}]\label{lem:Harnack-pits}
Let $f\colon \R^d\to \R^d$ be a quasiregular map of transcendental type that
has the pits effect.
Then there exists $N\in \N$ such that for all $\alpha>1$,
all $c>1$ and
all $\varepsilon>0$ there exists $R_0$ such that if $R>R_0$, then
\[ \{x\in \R^d:R\le |x|\le cR,\ |f(x)|\le R^\alpha \} \]
can be covered by $N$ balls of radius $\varepsilon R$.
\end{lem}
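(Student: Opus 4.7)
The plan is to argue by contradiction, converting a failure of the strengthened pits effect into a failure of the original pits effect, with the quasiregular Harnack inequality (Lemma~\ref{lem:Harnack}) serving as the bridge. Suppose the conclusion fails for some $\alpha>1$, $c>1$, $\varepsilon>0$; then for arbitrarily large $R$ the set
\[ S_R = \{x\in\R^d : R\le |x|\le cR,\ |f(x)|\le R^\alpha \} \]
cannot be covered by $N$ balls of radius $\varepsilon R$, where $N$ is the pits-effect constant of $f$. In particular, we may pick $N+1$ points of $S_R$ whose pairwise distances all exceed $2\varepsilon R$.

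The crucial step is to show that, for $R$ sufficiently large, every $x_0\in S_R$ lies within $\varepsilon R/2$ of some pit of $f$, that is, a point $y$ with $|f(y)|\le 1$. Suppose this fails; then $|f|>1$ throughout $B(x_0,\varepsilon R/2)$, so $\log|f|$ is a positive function there to which the quasiregular Harnack inequality applies, yielding $|f|\le R^{C\alpha}$ on $B(x_0,\varepsilon R/4)$ for some $C=C(d,K)$. To globalise this local bound, apply the original pits effect to a wider annulus $\{R\le|x|\le c'R\}$ with $c'>c+1$ and with a small parameter $\eta\ll\varepsilon$: its pit set is covered by $N$ balls of radius $\eta R$, and for $R$ large the complement $D$ of these $N$ balls inside the wider annulus is connected. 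Any two points of $D$ can then be joined by a Harnack chain of length bounded by some constant $K_0=K_0(d,c',\eta,N)$, and iterating Harnack along such a chain based at $x_0$ propagates the bound to $|f|\le R^{C^{K_0}\alpha}$ throughout $D$. However, Lemma~\ref{lem:M(Ar)} supplies some $r\in[R,c'R]$ whose full sphere lies in $D$ (available since only $N$ pit balls can obstruct it) and for which $M(r,f)>R^{C^{K_0}\alpha+1}$, the desired contradiction.

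Once we know that every $x_0\in S_R$ lies within $\varepsilon R/2$ of a pit, our $N+1$ well-separated points in $S_R$ produce $N+1$ pits with pairwise distances exceeding $\varepsilon R$ inside the slightly enlarged annulus $\{R-\varepsilon R/2\le|x|\le cR+\varepsilon R/2\}$, violating the original pits effect with parameters $(c+1,\varepsilon/3)$ for $R$ large. The main obstacle in this plan is the Harnack-chain step: one must carefully verify connectivity of $D$, extract a clean sphere $\{|x|=r\}\subset D$, and bound the chain length $K_0$ purely in terms of the structural constants of $f$, which calls for some geometric bookkeeping on how $N$ small balls can obstruct paths inside a $d$-dimensional spherical shell.
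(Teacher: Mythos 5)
Your overall strategy is the right one: the paper itself gives no proof of this lemma, quoting it from \cite[Theorem~8.1]{BN}, and, as the remark preceding the statement indicates, the proof there is likewise based on the quasiregular Harnack inequality (Lemma~\ref{lem:Harnack}) together with the growth estimates of Lemma~\ref{lem:M(Ar)}, very much in the spirit of what you outline. However, as written your endgame does not close numerically. From the assumption that $S_R$ cannot be covered by $N$ balls of radius $\varepsilon R$, greedy selection only yields $N+1$ points with pairwise distances exceeding $\varepsilon R$, not $2\varepsilon R$ (the latter would require non-coverability by balls of radius $2\varepsilon R$, a stronger hypothesis). Since your crucial step places a pit only within $\varepsilon R/2$ of each of these points, the resulting pits are guaranteed pairwise separation greater than $\varepsilon R-2(\varepsilon R/2)=0$, which is vacuous, so the intended contradiction with the pits effect at parameters $(c+1,\varepsilon/3)$ fails as stated. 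The repair is easy because the radius in the crucial step is a free parameter: show each $x_0\in S_R$ has a pit within, say, $\varepsilon R/8$ and invoke the pits effect on the enlarged annulus with covering radius $\varepsilon/8$. Better still, once the crucial step is established you can drop the separated-points device altogether: every point of $S_R$ then lies within $\delta R$ of the set where $|f|\le1$ inside an annulus to which the pits effect applies, so $S_R$ is covered by the same $N$ pit balls enlarged by $\delta R$, i.e.\ by $N$ balls of radius $(\eta+\delta)R\le\varepsilon R$.

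The second point is that the step you defer as ``geometric bookkeeping'' is really the substantive part of the argument, and it needs more than connectivity of $D$. Each ball in the Harnack chain must lie in a region where you actually know $|f|>1$, and that knowledge extends only to the wider annulus minus the $N$ pit balls, together with $B(x_0,\varepsilon R/2)$; so the chain must keep distance comparable to $R$ from the pit balls and from the boundary spheres, with the number of links bounded by a constant independent of $R$ and uniform over all configurations of $N$ balls of radius $\eta R$. For the same reason the sphere $\{x:|x|=r\}$ should be chosen at a fixed-fraction distance $\rho R$ from every pit ball, not merely disjoint from them (a sphere point arbitrarily close to a pit ball cannot be reached by a chain of bounded length); this is still possible since the balls exclude radii lying in at most $N$ intervals of total length $2N(\eta+\rho)R$, which is less than the width of the annulus for small $\eta,\rho$. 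Finally, a point $x_0\in S_R$ may satisfy $|x_0|=R$, i.e.\ it sits on the inner boundary of the annulus where you invoked the pits effect, so positivity of $|f|$ on a full ball about $x_0$ must come from your contradiction hypothesis (it does), and the chain should first move outward inside $B(x_0,\varepsilon R/2)$, or else the pits effect should be applied on an annulus beginning at $R/2$ so that $S_R$ is well in the interior. None of this requires a new idea, but it is exactly where the uniform-in-$R$ constant $C^{K_0}$ is earned, so it should be written out rather than asserted.
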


\subsection{Proof of Theorem~\ref{thm1} for functions with the pits effect}

In this subsection we shall assume that $f$ is a transcendental type quasiregular mapping that has the pits effect. We aim to prove Theorem~\ref{thm1} for such~$f$ by using Lemma~\ref{lem:Harnack-pits} to satisfy the conditions for holding-up via Lemma~\ref{lem:holdup}.

Informally, the next lemma provides sets $V_k$ in which an orbit may be held up, while the following Lemma~\ref{lem:pits2} will provide the means by which an orbit can be advanced from $V_k$ to $V_{k+1}$.

\begin{lem}\label{lem:pits1}
There exist $0<\delta\le \frac12$ and a sequence of points $(x_k)$ tending to infinity such that the moduli $R_k=|x_k|$ are increasing and the balls $V_k=B(x_k,\delta R_k)$ satisfy
\eqn
f(V_k) \supset B(0,2R_k) \supset V_k. \label{Vk}
\eqnend
\end{lem}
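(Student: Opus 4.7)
The plan is to combine the pits effect (Lemma~\ref{lem:Harnack-pits}) with an elementary topological degree argument. We will locate each $V_k = B(x_k, \delta R_k)$ so that it contains a point where $|f|$ is small, while $|f|$ is very large on $\partial V_k$; openness of $f$ together with connectedness of the target ball will then force $f(V_k)\supset B(0,R_k^2/4)$, and this contains $B(0,2R_k)$ once $R_k\ge 8$.

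By Lemma~\ref{lem:Rickman}, I may pick $a\in\R^d$ with $|a|<1$ and $f^{-1}(\{a\})$ infinite, and list its preimages as a sequence $(y_n)$ with $R_n := |y_n|\to\infty$. Fix $\varepsilon>0$ small enough that $N\varepsilon<1/2$, where $N$ is the integer from the pits effect. Applying Lemma~\ref{lem:Harnack-pits} with $\alpha=2$, $c=4$ and $R=R_n/2$, we see that for all large $n$ the set $\{x:R_n/2\le|x|\le 2R_n,\ |f(x)|\le R_n^2/4\}$ is covered by $N$ balls $P_i^{(n)}=B(p_i^{(n)},\varepsilon R_n/2)$, one of which contains $y_n$ because $|f(y_n)|=|a|<1\le R_n^2/4$. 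Writing $d_i^{(n)}=|y_n-p_i^{(n)}|/R_n$, the sphere $\partial B(y_n,\delta R_n)$ meets some pit exactly when $\delta$ lies in the set
\[
B_n := \bigcup_{i=1}^N \bigl(d_i^{(n)}-\tfrac{\varepsilon}{2},\,d_i^{(n)}+\tfrac{\varepsilon}{2}\bigr)\cap(0,\tfrac{1}{2}],
\]
which has Lebesgue measure at most $N\varepsilon<1/2$. A pigeonhole argument, applied after partitioning $(0,1/2]$ into finitely many short intervals, produces a single $\delta\in(0,1/2]$ with $\delta\notin B_n$ for infinitely many $n$; passing to a further subsequence for monotonicity supplies $(n_k)$ with $R_{n_k}$ strictly increasing. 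Setting $x_k=y_{n_k}$, $R_k=|x_k|$ and $V_k=B(x_k,\delta R_k)$, we have $V_k\subset B(0,(1+\delta)R_k)\subset B(0,2R_k)$, and the choice of $\delta$ ensures $|f|>R_k^2/4$ on $\partial V_k$.

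To finish, observe that $f\colon V_k\to\R^d$ is open since $f$ is non-constant quasiregular, so $S:=f(V_k)\cap B(0,R_k^2/4)$ is open. It is also closed in $B(0,R_k^2/4)$: any accumulation point $w$ of $S$ in that ball equals $f(z)$ for some $z\in\overline{V_k}$ obtained as a subsequential limit of preimages, and $z\in\partial V_k$ is impossible because $|f(z)|>R_k^2/4>|w|$ would then fail. Since $f(x_k)=a\in S$, the connectedness of $B(0,R_k^2/4)$ forces $S=B(0,R_k^2/4)$, whence $f(V_k)\supset B(0,R_k^2/4)\supset B(0,2R_k)$ for $R_k\ge 8$. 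The principal obstacle is the pigeonhole step producing a single $\delta$ that works along an infinite subsequence; once that is in place, the topological conclusion is routine.
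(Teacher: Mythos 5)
Your proposal is correct and essentially follows the paper's own route: both use the pits effect (Lemma~\ref{lem:Harnack-pits} with $\alpha=2$) around points of small $|f|$-value, a pigeonhole over candidate radii to obtain a single $\delta$ for which the spheres $\partial B(x_k,\delta R_k)$ avoid the $N$ pits along a subsequence (the paper uses the $N+1$ radii $lR_k/(4N)$), and then openness of $f$ to get the covering, your clopen connectedness argument replacing the paper's $\partial f(V_k)\subset f(\partial V_k)$ step. To make your pigeonhole step precise, simply fix $N+1$ candidate values of $\delta$ in $(0,\tfrac12]$ spaced at least $\varepsilon$ apart (possible since $N\varepsilon<\tfrac12$); each bad set $B_n$ misses at least one candidate, so some candidate is missed for infinitely many $n$.
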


\begin{proof}
Let $(x_k)$ be a sequence tending to infinity such that $|f(x_k)|\le 1$ and let $N\in\N$ be as in Lemma~\ref{lem:Harnack-pits}. We can assume that $R_k=|x_k|$ is increasing.

We claim that there exists $\delta\in\{\frac{1}{4N},\ldots,\frac{N+1}{4N}\}$ such that, after passing to a subsequence of $(x_k)$, we have
\eqn
\inf_{x\in \partial B(x_k,\delta R_k)} |f(x)| \ge 2R_k \label{inf|f|>2Rk} \quad \mbox{ for all } k.
\eqnend
Suppose not, then for all large $k$ and all $l\in\{1,\ldots,N+1\}$ there must be some point $y\in \partial B(x_k, lR_k/4N)$ such that $|f(y)|<2R_k$. In particular, this means that
\[ \{x\in\R^d: |x-x_k|\le R_k/2,\ |f(x)|\le 2R_k \} \]
cannot be covered by $N$ balls of radius $R_k/9N$. For large $k$, this set is contained in
\[ \{ x\in \R^d : R_k/2 \le |x| \le 3R_k/2,\ |f(x)|\le (R_k/2)^2 \} \]
and hence this contradicts Lemma~\ref{lem:Harnack-pits} with $\alpha=2$, $c=3$ and $\varepsilon=2/9N$. This proves the claim.

It remains to prove that \eqref{Vk} holds for the balls $V_k=B(x_k,\delta R_k)$. It is clear that $V_k\subset B(0,2R_k)$. We note that $\partial f(V_k) \subset f(\partial V_k) = f(\partial B(x_k,\delta R_k))$ because quasiregular mappings are open. Therefore it follows from \eqref{inf|f|>2Rk} that ${\partial f(V_k) \cap B(0,2R_k)=\emptyset}$. Since $|f(x_k)|\le 1$, we obtain the set inclusion \eqref{Vk}.
\end{proof}

The maximum modulus function $M(r,f)$ is continuous and increasing in $r$ and, by Lemma~\ref{lem:M(Ar)}, $M(r,f)>2r$ for all large $r$. We now assume that the values $R_k$ given by Lemma~\ref{lem:pits1} are so large that the real numbers $s_k$ defined by $M(s_k,f)=R_k$ are large enough that $M(s_k,f)>2s_k$. Note further that $s_k\to\infty$ as $k\to\infty$.

We denote spherical shells centred at the origin by
\[ A(s,t) = \{x\in\R^d : s<|x|<t \}. \]

\begin{lem}\label{lem:pits2}
For $k\in\N$ and $t\ge 2R_k$,
\[ f(A(s_k,t)) \supset A(s_k, 2t). \]
\end{lem}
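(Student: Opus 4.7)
My plan is to fix $y \in A(s_k, 2t)$ and produce $x \in A(s_k, t)$ with $f(x) = y$, splitting into two cases according to $|y|$. A useful preliminary observation is that $M(s_k, f) = R_k$ together with the maximum principle for quasiregular maps gives $|f| \le R_k$ throughout $\overline{B(0, s_k)}$, so any preimage of $y$ lying in $\overline{B(0, t)}$ automatically lies in $A(s_k, t)$ as soon as $|y| > R_k$.

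If $|y| < 2R_k$, then $y \in B(0, 2R_k) \subset f(V_k)$ by Lemma~\ref{lem:pits1}. Since $|x_k| = R_k > 2s_k$ and $\delta \le 1/2$, one checks that $V_k \subset A((1-\delta)R_k, (1+\delta)R_k) \subset A(s_k, t)$ (using $t \ge 2R_k$), and hence $y \in f(A(s_k, t))$. If instead $|y| \ge 2R_k$ (so in particular $|y| > R_k$), then by the preliminary it suffices to produce a preimage in $B(0, t)$. By Lemma~\ref{lem:M(Ar)}, once $s_k$ (and hence $t$) is large enough we have $M(t, f) \ge 2t > |y|$, so there are points on $\partial B(0, t)$ where $|f|$ exceeds $2t$. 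My plan here is to consider the connected component $W$ of $\{x \in B(0, t) : |f(x)| < 2t\}$ containing $x_k$; this is bounded, with $\partial W \subset \partial B(0, t) \cup \{|f| = 2t\}$, so $f(W)$ is an open subset of $B(0, 2t)$ whose boundary lies in $\partial B(0, 2t) \cup f(\partial B(0, t))$. One then argues, using openness of $f$ together with the positivity of the Brouwer local degree for quasiregular maps, that $f(W) \supset A(s_k, 2t)$, which yields the required preimage of $y$.

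The main obstacle is the careful control of $\partial W \cap \partial B(0, t)$: at such points $|f|$ need not equal $2t$, so $f(\partial W)$ may intrude into the interior of $B(0, 2t)$ rather than remain on $\partial B(0, 2t)$. Ruling out that this obstruction prevents $f(W)$ from meeting every $y \in A(s_k, 2t)$ is the heart of the proof, and requires combining the explicit covering $f(V_k) \supset B(0, 2R_k)$ from Lemma~\ref{lem:pits1} with the growth estimate $M(t, f) \ge 2t$ in a topological/degree argument showing that $f(\partial B(0, t)) \cap B(0, 2t)$ cannot disconnect the component of $f(W)$ that contains the origin from the target point $y$.
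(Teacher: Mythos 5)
There is a genuine gap at the heart of your argument. Your first case ($|y|<2R_k$, handled via $f(V_k)\supset B(0,2R_k)$ and $V_k\subset A(s_k,t)$) is correct and is essentially how the paper deals with preimages that fall inside $B(0,s_k)$. But the second case — producing a preimage in $B(0,t)$ for every $y$ with $2R_k\le |y|<2t$ — is exactly the crux of the lemma, and you only sketch a ``topological/degree argument'' which you yourself identify as unresolved: the boundary portion $\partial W\cap\partial B(0,t)$, where $|f|$ may be small, can push $f(\partial W)$ into the interior of $B(0,2t)$, and nothing you list ($M(t,f)\ge 2t$, which holds for \emph{every} transcendental-type map, plus $f(V_k)\supset B(0,2R_k)$) rules this out. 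Indeed this is precisely the point where the pits effect must enter: the lemma sits inside the pits-effect case, and for general transcendental-type quasiregular maps such full-shell covering statements can fail, which is why the paper develops entirely different machinery (Lemmas~\ref{lem:Q-Minio}--\ref{lem:Uj}) for maps without the pits effect. An argument that never invokes the pits effect cannot close this case.

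The missing ingredient is Lemma~\ref{lem:Harnack-pits}. Since $f$ has the pits effect, for large $t$ the set $\{x: \tfrac{9}{10}t\le|x|\le t,\ |f(x)|\le 2t\}$ is covered by $N$ balls of radius $\varepsilon t$; choosing $\varepsilon$ small relative to $N$, some sphere $\{|x|=t'\}$ with $t'\in[\tfrac{9}{10}t,t]$ avoids all of them, so $\inf_{|x|=t'}|f(x)|\ge 2t$. Then the topological step becomes trivial: $\partial f(B(0,t'))\subset f(\partial B(0,t'))$ is disjoint from $B(0,2t)$, and since $x_k\in B(0,t')$ with $|f(x_k)|\le 1$, openness of $f$ gives $B(0,2t)\subset f(B(0,t'))\subset f(B(0,t))$. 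From there your preliminary observation (preimages of points of modulus $>R_k$ cannot lie in $\overline{B(0,s_k)}$, by $M(s_k,f)=R_k$) together with your first case finishes the proof, exactly as in the paper. Without replacing your sketched degree argument by this (or an equivalent) use of the pits effect, the proof is incomplete.
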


\begin{proof}
As we are assuming that $f$ has the pits effect and $R_k$ is large, it follows from Lemma~\ref{lem:Harnack-pits} that, given any $t\ge 2R_k$, there exists $t'\in[\frac{9}{10}t,t]$ such that
\[ \inf_{|x|=t'} |f(x)|\ge 2t.\]
By an argument similar to that at the end of the previous proof, this shows that
\[ B(0,2t) \subset f(B(0,t')) \subset f(B(0,t)). \]
Now let $x\in A(s_k,2t)$. By the above, there is $y\in B(0,t)$ such that $f(y)=x$. Observe that either $y\in A(s_k,t)$ as required, or else $|y|\le s_k$. In the latter case, $|x|=|f(y)|\le M(s_k,f)= R_k$ and so by Lemma~\ref{lem:pits1} there is $z\in V_k$ such that $f(z)=x$. The proof is then completed by noting that
\[ V_k = B(x_k,\delta R_k) \subset A\!\left(\tfrac12 R_k, \tfrac32  R_k\right) \subset A(s_k, t). \qedhere \]
\end{proof}

Next we will choose a sequence of sets $A_\nu = A_\nu^{(1)}$ (we shall omit the superscript) that will satisfy the hypotheses of Lemma~\ref{lem:holdup} with $p=1$. In brief, the sets $V_k$ will form the subsequence $A_{\nu_k}$ at which `holding-up' can occur (as in \eqref{Hc}), while the rest of the $A_\nu$ will be chosen to be spherical shells like those appearing in Lemma~\ref{lem:pits2}. The details are as follows.

Begin by putting $\nu_1=1$ and inductively defining $\nu_{k+1}=\nu_k +L$, where $L$ is the least integer greater than $1$ such that $\frac32 R_{k+1}\le 2^LR_k$. Set $A_{\nu_k}=V_k$. The condition \eqref{Hc} is then satisfied by \eqref{Vk} of Lemma~\ref{lem:pits1}. Moreover, since $R_k$ is large, we can assume that $B(0,2R_k)\cap J(f)\ne\emptyset$ and therefore \eqref{Vk} and the complete invariance of $J(f)$ together imply that $V_k\cap J(f)\ne\emptyset$; in particular, condition \eqref{Hd} is fulfilled.

Next, for those integers $\nu$ in the range $\nu_k<\nu<\nu_{k+1}$, we set
\[ A_\nu = A(s_k, 2^{\nu-\nu_k}R_k).\]
Observe that the whole sequence $(A_\nu)$ tends to infinity in the sense of \eqref{Ha} because both $R_k\to\infty$ and $s_k\to\infty$ as $k\to\infty$. We verify that condition \eqref{Hb} is satisfied in three cases:
\begin{enumerate}
	\item When $\nu=\nu_k$ for some $k$, we have that
	\[ f(A_{\nu_k}) = f(V_k)  \supset B(0,2R_k) \supset A(s_k, 2R_k) = A_{\nu_k +1}, \]
	using \eqref{Vk} of Lemma~\ref{lem:pits1}.
	\item When $\nu_k<\nu<\nu_{k+1}-1$ for some $k$, we have that
	\[ f(A_\nu) = f(A(s_k, 2^{\nu-\nu_k}R_k)) \supset A(s_k, 2^{\nu+1-\nu_k}R_k) = A_{\nu+1}, \]
	by Lemma~\ref{lem:pits2}.
  \item When $\nu=\nu_{k+1}-1$ for some $k$, we have that
  \begin{align*}
  f(A_\nu)  = f(A(s_k, 2^{\nu_{k+1}-1-\nu_k}R_k)) & \supset A(s_k, 2^{\nu_{k+1}-\nu_k}R_k) \\
  & \supset A\!\left(s_k, \tfrac32 R_{k+1}\right),
  \end{align*}
  by using Lemma~\ref{lem:pits2} and recalling the definition of the subsequence~$(\nu_k)$. The remarks preceding Lemma~\ref{lem:pits2} give that $s_k<M(s_k,f)/2 = R_k/2$ and so, since $R_k\le R_{k+1}$, we now see that
  \[ f(A_\nu) \supset A\!\left(s_k, \tfrac32 R_{k+1}\right) \supset A\!\left(\tfrac12 R_{k+1}, \tfrac32 R_{k+1}\right) \supset V_{k+1} = A_{\nu+1}. \]
\end{enumerate}

Since the sequence of sets $(A_\nu)$ satisfies all the hypotheses of Lemma~\ref{lem:holdup}, an application of that result now completes the proof of Theorem~\ref{thm1} for functions having the pits effect.

\subsection{Proof of Theorem~\ref{thm1} for functions without the pits effect}\label{sect:no-pits}

The~holding-up technique will be used again to find slowly-escaping points for functions that do not have the pits effect. The structure of the argument is similar to that of the previous subsection: this time the `holding-up' will be provided by Lemma~\ref{lem:Uj} below, while the `marching forward' in between will be handled first by Lemma~\ref{lem:Qj}. In fact, the latter result will hold regardless of the presence of the pits effect and a modification of it will form the basis of the proof of Theorem~\ref{thm2} in Section~\ref{sect:thm2}.

We begin by defining domains $Q_j(r)\subset \R^d$ as follows. Let $q$ be a positive integer and fix $2q$ distinct unit vectors $v_1,\ldots,v_{2q}$ (that is, $v_j\in\R^d$ and $|v_j|=1$ for each $j$). Fix $\theta>0$ so small that, for $j\in\{1,\ldots,2q\}$, the truncated cones
\[ C_j = \left\{x\in\R^d : \frac14 < |x| < 2q+1, \ \frac{v_j\cdot x}{|x|}>\cos\theta \right\} \]
have pairwise disjoint closures, where $v\cdot x$ denotes the usual scalar product. Define, for $r>0$ and $j\in\{1,\ldots,2q\}$,
\[ Q_j(r) = A\!\left(jr, \left(j+\tfrac12\right)r\right) \cup rC_j. \]
Here and subsequently we use the notation $rE= \{rx:x\in E\}$. Note that $Q_j(r)=rQ_j(1)$. The following lemma exploits the fact that, although the sets $Q_j(1)$ are not disjoint, no point lies in more than two of these sets.

\begin{lem}\label{lem:Q-Minio}
Let $\mathcal{F}$ be a family of $K$-quasiregular functions on a domain $D\subset \R^d$ and let $q=q(d,K)$ be Rickman's constant. If every function in $\mathcal{F}$ omits a value in each of the sets $Q_1(1),\ldots,Q_{2q}(1)$, then $\mathcal{F}$ is normal.
\end{lem}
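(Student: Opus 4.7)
The plan is to argue by contradiction, via a Zalcman-type rescaling of the kind underlying Miniowitz's proof of Lemma \ref{lem:Minio}. Suppose $\mathcal{F}$ fails to be normal at some point $x_0\in D$. Then, by the quasiregular analogue of Zalcman's rescaling lemma, one can extract functions $f_n\in\mathcal{F}$, points $x_n\to x_0$ in $D$, and positive scales $\rho_n\to 0$ such that the rescalings
\[ g_n(y) := f_n(x_n + \rho_n y) \]
converge, locally uniformly on $\R^d$ with respect to the spherical metric, to a non-constant $K$-quasimeromorphic function $g\colon\R^d\to\overline{\R^d}$.

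Next I would exploit that each $g_n$ is itself $K$-quasiregular into $\R^d$, so it omits $\infty$, and that it also omits the value $a_j(f_n)\in Q_j(1)$ for each $j\in\{1,\ldots,2q\}$. Because $Q_j(1)\subset B(0,2q+1)$, passing to a subsequence I may assume $a_j(f_n)\to a_j^\ast\in\overline{Q_j(1)}$ for every $j$. A Hurwitz-type argument for quasimeromorphic mappings—relying on the fact that the non-constant $g$ is open and discrete—then forces $g$ to omit $\infty$ as well as each $a_j^\ast$: otherwise, openness of $g$ near any point where it attained some such value would, for all large $n$, force $g_n$ to hit the corresponding nearby omitted value, a contradiction.

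The geometric heart of the argument is that no point of $\R^d$ lies in more than two of the closed sets $\overline{Q_j(1)}$. Indeed, the closed annuli $\overline{A(j,j+1/2)}$, for $j=1,\ldots,2q$, are pairwise disjoint because consecutive intervals $[j,j+1/2]$ are separated by gaps of length $1/2$, while the closed truncated cones $\overline{C_j}$ are pairwise disjoint by the choice of $\theta$. Since $\overline{Q_j(1)}=\overline{A(j,j+1/2)}\cup\overline{C_j}$, each point of $\R^d$ can lie in at most one annulus and at most one cone, hence in at most two sets $\overline{Q_j(1)}$. It follows that each value is repeated at most twice among $a_1^\ast,\ldots,a_{2q}^\ast$, so at least $q$ of these limits are distinct.

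Putting this together, $g$ omits $\infty$ and so descends to a non-constant $K$-quasiregular map $\R^d\to\R^d$ that omits at least $q$ distinct finite values, in direct contradiction to Rickman's theorem (Lemma \ref{lem:Rickman}). This contradiction will establish the normality of $\mathcal{F}$. The main obstacle I anticipate is the careful invocation of the Zalcman-type rescaling together with the Hurwitz property for quasimeromorphic limits; both are standard machinery behind Lemma \ref{lem:Minio}, but an explicit reference should be supplied. The novelty here is entirely in the combinatorial-geometric observation that the particular configuration of annuli and cones making up the $Q_j(1)$ forces at most two-fold overlap and hence $\geq q$ distinct limit values—exactly what is needed to activate Rickman's theorem.
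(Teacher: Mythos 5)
Your argument is correct in substance, but it takes a genuinely different route from the paper. The paper does not go back to rescaling at all: it observes that if $b_j\in Q_j(1)$ is an omitted value of $f\in\mathcal{F}$ for each $j$, then since each $b_j$ lies in either the annulus $A\!\left(j,j+\tfrac12\right)$ or the cone $C_j$, a pigeonhole between the two families (annuli pairwise at distance $\ge\tfrac12$, cones pairwise at distance $\ge\varepsilon$) yields $q$ omitted values that are \emph{uniformly} $\varepsilon$-separated, independently of $f$; adding $\infty$ as a $(q+1)$-th omitted value (each $f$ is quasiregular and the $Q_j(1)$ are bounded, so these values are also chordally separated from $\infty$) lets one quote Miniowitz's normality criterion, Lemma~\ref{lem:Minio}, directly. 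You instead re-prove a Miniowitz-type statement from first principles: Zalcman--Miniowitz rescaling of a non-normal family, a Hurwitz-type argument showing the non-constant limit omits $\infty$ and the limits $a_j^\ast$ of the omitted values, and then your combinatorial observation that a point lies in at most two of the $\overline{Q_j(1)}$, so at least $q$ of the $a_j^\ast$ are distinct, contradicting Rickman's theorem (Lemma~\ref{lem:Rickman}). What the paper's route buys is brevity and self-containedness: it needs only Lemma~\ref{lem:Minio} as a black box, at the price of having to produce a separation constant uniform over the family. Your route buys the weaker requirement of mere distinctness of the limiting omitted values (no uniform separation needed), at the price of invoking extra machinery not stated in the paper: the quasiregular Zalcman lemma (which is in Miniowitz's paper) and a Hurwitz-type value-attainment statement for a moving target $a_j(f_n)\to a_j^\ast$, which requires a topological degree (local index) argument rather than openness of $g$ alone; you should either cite these precisely or include the short degree argument, but this is a matter of references rather than a gap in the idea.
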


\begin{proof}
Take $0<\varepsilon\le\frac12$ such that, for $i\ne j$,
\[ \operatorname{dist}(C_i,C_j) = \inf\{|x-y| : x\in C_i, \, y\in C_j\} \ge \varepsilon. \]
Note also that, when $i\ne j$,
\[ \operatorname{dist}\left(A\!\left(i,i+\tfrac12\right), A\!\left(j,j+\tfrac12\right)\right) = |i-j|-\tfrac12 \ge \tfrac12. \]

Now consider any set $B=\{b_1,\ldots,b_{2q}\}$ such that $b_j\in Q_j(1)$ for each~$j$. Since each $b_j$ belongs to either $A\!\left(j,j+\tfrac12\right)$ or $C_j$, there must exist a subset $\{\beta_1,\ldots, \beta_q\}\subset B$ of cardinality $q$ with the property that $|\beta_i-\beta_j|\ge \varepsilon$ for $i\ne j$.

The normality of $\mathcal{F}$ may thus be deduced from Lemma~\ref{lem:Minio}, in view of the fact that $\mathcal{F}$ is a family of quasiregular functions and the sets $Q_j(1)$ are bounded away from infinity in the chordal metric.
\end{proof}

Next we apply Lemma~\ref{lem:Q-Minio} to prove a covering result that is similar to \cite[Proposition~5.1]{BDF}, but offers tighter control over the location of the sets being covered. The proof is based on that given in \cite{BDF}.

\begin{lem}\label{lem:Qj}
Let $f\colon \R^d\to\R^d$ be $K$-quasiregular of transcendental type, let ${q=q(d,K)}$ be Rickman's constant and let $U_1,\ldots,U_q$ be bounded sets in $\R^d$ with pairwise disjoint closures. Then, for all sufficiently large $r$ and each ${j\in\{1,\ldots,2q\}}$, we have
\eqn \label{Qj*}
f(Q_j(r)) \supset Q_i(M(r,f)), \quad \mbox{for some } i\in\{1,\ldots,2q\},
\eqnend
and
\eqn  \label{Qj**}
f(Q_j(r)) \supset M(r,f)U_l, \quad \mbox{for some } l\in\{1,\ldots,q\}.
\eqnend
\end{lem}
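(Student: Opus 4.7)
Both inclusions will be proved by contradiction, using a normal families argument applied to the rescalings
\[
g_n(x) = \frac{f(r_n x)}{M(r_n,f)},
\]
which are $K$-quasiregular on $\R^d$. Since $Q_j(r)=rQ_j(1)$ and $M(r_n,f)U_l$ is just the rescaling of $U_l$, the inclusions \eqref{Qj*} and \eqref{Qj**} for $f$ on $Q_{j_0}(r_n)$ become, for $g_n$ on $Q_{j_0}(1)$, the requirements that $g_n(Q_{j_0}(1))\supset Q_i(1)$ for some $i$, respectively $g_n(Q_{j_0}(1))\supset U_l$ for some $l$.

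To prove \eqref{Qj*}, suppose it fails; passing to subsequences, there is a fixed $j_0\in\{1,\ldots,2q\}$ and $r_n\to\infty$ such that each $g_n$, restricted to $Q_{j_0}(1)$, omits some value $a_i^{(n)}\in Q_i(1)$ for every $i\in\{1,\ldots,2q\}$. Lemma~\ref{lem:Q-Minio} then makes $\{g_n\}$ normal on $Q_{j_0}(1)$, and along a further subsequence $g_n\to g$ locally uniformly in the chordal metric, where $g$ is either quasimeromorphic on $Q_{j_0}(1)$ or $g\equiv\infty$. The a priori bound $|g_n(x)|\le 1$ on the nonempty set $C_{j_0}\cap\{|x|=1\}\subset Q_{j_0}(1)$ rules out the possibility $g\equiv\infty$, so $g$ is genuinely quasimeromorphic. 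Extracting once more so that $a_i^{(n)}\to a_i\in\overline{Q_i(1)}$, a Hurwitz-type argument, relying on openness and discreteness of non-constant quasiregular maps, shows that $g$ itself must omit each $a_i$. The contradiction is then provided by Lemma~\ref{lem:M(Ar)}: the ratio $M((j_0+\tfrac12)r_n,f)/M(r_n,f)\to\infty$ forces $\sup|g_n|$ to blow up near the outer boundary of the annulus $A(j_0,j_0+\tfrac12)\subset Q_{j_0}(1)$, so the limit $g$ has a pole somewhere in $Q_{j_0}(1)$; the open mapping property then makes the image of $g$ too large to avoid the $2q$ distinct bounded limit values $a_i$.

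For \eqref{Qj**} the scheme is identical, but now if the conclusion fails then each $g_n$ omits a value $b_l^{(n)}\in U_l$ for every $l\in\{1,\ldots,q\}$. Together with the value $\infty$, which is automatically omitted by the $\R^d$-valued quasiregular $g_n$, and using that the closures $\overline{U_l}$ are pairwise disjoint and bounded (hence chordally separated from each other and from $\infty$), Miniowitz's Lemma~\ref{lem:Minio} supplies normality, and the rest of the argument runs as in the previous paragraph.

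The main obstacle, in both parts, is the final step of converting the existence of the quasimeromorphic limit $g$ with prescribed omissions and a pole in $Q_{j_0}(1)$ into a genuine contradiction; this is where the geometric design of the sets $Q_j$, which is arranged to force both radial coverage (via the annular pieces) and angular spread (via the cones $C_j$), must be exploited together with the quasiregular maximum principle and openness. The other steps — setting up the rescaling, applying Lemma~\ref{lem:Q-Minio} or Lemma~\ref{lem:Minio}, and ruling out $g\equiv\infty$ — are essentially bookkeeping.
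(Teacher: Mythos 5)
Your setup (rescaling by $M(r_n,f)$, getting normality from the omitted values via Lemma~\ref{lem:Q-Minio}, resp.\ Lemma~\ref{lem:Minio} together with the omitted value $\infty$, and ruling out the limit $\equiv\infty$ by the bound $|g_n|\le 1$ at points of modulus $1$) matches the paper's strategy, but the argument has a genuine gap exactly at the step you defer as ``the main obstacle'', and the route you sketch for it does not work. First, the blow-up you invoke is anchored at the wrong radius: $M((j_0+\tfrac12)r_n,f)/M(r_n,f)\to\infty$ only controls $\sup|g_n|$ on the sphere $|x|=j_0+\tfrac12$, which is the \emph{boundary} of the annular part of $Q_{j_0}(1)$; locally uniform convergence on compact subsets of the open set $Q_{j_0}(1)$ says nothing there, so you cannot conclude that the limit $g$ has a pole in $Q_{j_0}(1)$. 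You must take an interior radius, e.g.\ maximum modulus points of $f$ on $|x|=(j_0+\tfrac14)r_n$, which after rescaling lie on the compact sphere $\{|x|=j_0+\tfrac14\}\subset A(j_0,j_0+\tfrac12)\subset Q_{j_0}(1)$, and $|g_n|\to\infty$ there by Lemma~\ref{lem:M(Ar)} since $j_0+\tfrac14>1$. Second, your proposed contradiction ``$g$ has a pole, so by openness its image is too large to avoid the bounded limit values $a_i$'' is false as stated: near a pole the image covers a neighbourhood of $\infty$, which avoids every bounded set, so openness gives no conflict with omitting the $a_i\in\overline{Q_i(1)}$. (The Hurwitz step also needs the caveat ``or $g\equiv a_i$'', but this whole branch is unnecessary.)

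The correct finish is much simpler and needs no analysis of the omitted values, the limit's poles, or any further geometry of the $Q_j$: once the family $\{g_n\}$ is normal, a locally uniform (chordal) limit of the $K$-quasiregular maps $g_n$ is either $K$-quasiregular, hence locally bounded, or identically $\infty$. The second alternative is excluded by your bound $|g_n|\le 1$ on $\{|x|=1\}\cap Q_{j_0}(1)$ (the paper instead uses the fixed point $v_{j_0}/2$, where $|g_n|\le M(r_n/2,f)/M(r_n,f)\to 0$), and the first is excluded by the blow-up on the interior compact sphere $|x|=j_0+\tfrac14$ just described. This is exactly how the paper argues: it shows directly that for every sequence $r_n\to\infty$ the rescaled family fails to be normal on $Q_{j_0}(1)$, and then reads Lemma~\ref{lem:Q-Minio} (and Lemma~\ref{lem:Minio} for \eqref{Qj**}) in contrapositive to obtain \eqref{Qj*} and \eqref{Qj**} for all large $r$. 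So the part you labelled as bookkeeping is indeed the whole proof, and the part you labelled as the main obstacle should not appear at all.
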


\begin{proof}
Take $j\in\{1,\ldots,2q\}$ and, for $r>0$, define $h_r\colon Q_j(1)\to\R^d$ by
\[ h_r(x) = \frac{f(rx)}{M(r,f)}. \]
There are points $x\in Q_j(1)$ with $|x|\le\frac12$ (for example, $x=v_j/2$) and for such points
\eqn \label{Qj***}
 |h_r(x)| \le \frac{M(r/2,f)}{M(r,f)} \to 0 \ \mbox{ as } r\to\infty
\eqnend
by Lemma~\ref{lem:M(Ar)}. Next choose $y_r'\in\R^d$ with $|y_r'|=\left(j+\tfrac14\right)r$ and $|f(y_r')| = M\!\left(\left(j+\tfrac14\right)r,f\right)$. Setting $y_r=y_r'/r$ gives that $y_r\in Q_j(1)$ with $|y_r|=j+\tfrac14$ and
\[ |h_r(y_r)| = \frac{|f(y_r')|}{M(r,f)} = \frac{M\!\left(\left(j+\tfrac14\right)r,f\right)}{M(r,f)}. \]
Thus $|h_r(y_r)|\to\infty$ as $r\to\infty$ by Lemma~\ref{lem:M(Ar)}. Together with \eqref{Qj***}, this shows that the family $\{h_r:r>0\}$ is not normal on $Q_j(1)$. In fact, for any sequence $r_k\to\infty$, the family $\{h_{r_k}\}$ is not normal. Therefore, it follows from Lemma~\ref{lem:Q-Minio} that, for all large $r$, there exists $i\in\{1,\ldots,2q\}$ such that $h_r(Q_j(1))\supset Q_i(1)$. Recalling the definition of $h_r$ now yields \eqref{Qj*}.

Since $U_1,\ldots,U_q$ are bounded sets with disjoint closures and the functions $h_r$ are quasiregular, it follows similarly from Lemma~\ref{lem:Minio} that, for all large~$r$, there is $l\in\{1,\ldots,q\}$ such that $h_r(Q_j(1))\supset U_l$. This in turn implies \eqref{Qj**}.
\end{proof}

The next lemma concerns quasiregular functions that do not have the pits effect as defined in Subsection~\ref{sect:pits}. A large part of the proof is closely based on \cite[\S 3]{BN}.

\begin{lem}\label{lem:Uj}
Let $f\colon \R^d\to\R^d$ be a $K$-quasiregular map of transcendental type that does not have the pits effect and let $q=q(d,K)$ be Rickman's constant. Then there exist a sequence $(R_k)$ tending to infinity and bounded domains $U_1,\ldots,U_q$ with pairwise disjoint closures in $\{x\in\R^d : |x|\ge 1/2\}$ with the following properties:

For every $k\in\N$ and $j\in\{1,\ldots,q\}$,
\begin{enumerate}
\item[(i)] there is some $l\in\{1,\ldots,q\}$ such that \[ f(R_kU_j) \supset R_kU_l; \]
\item[(ii)] for every $s\in [1,M(R_k,f)]$, there is some $i\in\{1,\ldots,2q\}$ such that
\[ f(R_kU_j) \supset Q_i(s); \]
\item[(iii)] $R_k\overline{U_j} \cap J(f) \ne\emptyset$.
\end{enumerate}
\end{lem}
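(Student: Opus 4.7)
The plan is to follow closely \cite[\S 3]{BN}. Negating the pits-effect definition, there exist $c>1$, $\varepsilon_0>0$ and a sequence $R_k\to\infty$ such that, for each $k$, the set
\[ E_k = \{x\in\R^d : R_k\le|x|\le cR_k,\ |f(x)|\le 1\} \]
cannot be covered by $k$ balls of radius $\varepsilon_0 R_k$. A standard packing argument extracts, for each $k$, $q$ points $y_{k,1},\ldots,y_{k,q}\in E_k$ mutually separated by at least $2\eta R_k$ for some $\eta>0$ depending only on $c$, $\varepsilon_0$ and~$d$. Passing to a diagonal subsequence, the rescaled points $y_{k,j}/R_k$ converge to distinct limits $z_1,\ldots,z_q$ in $\{1\le|x|\le c\}$ with $|z_i-z_j|\ge 2\eta$ for $i\ne j$. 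I then take $U_j := B(z_j,\rho)$ for a small fixed $\rho>0$ with $\rho<\eta$ and $\rho<\min_j|z_j|-\tfrac{1}{2}$; these are $q$ pairwise disjoint bounded domains with closures in $\{|x|\ge 1/2\}$.

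For (i) and (ii), I would apply Miniowitz's theorem (Lemma~\ref{lem:Minio}) to the rescaled $K$-quasiregular family $\phi_k(x) = f(R_k x)/R_k$ on a neighborhood of $\overline{U_j}$. Each $\phi_k$ sends $y_{k,j}/R_k\in U_j$ to a point of modulus at most $1/R_k\to 0$, while by Lemma~\ref{lem:M(Ar)} we have $\max_{|x|=c}|\phi_k(x)| = M(cR_k,f)/R_k\to\infty$. By a non-normality argument parallel to the proof of Lemma~\ref{lem:Qj}, for each $j$ the family $\{\phi_k\}$ cannot simultaneously omit, on a slightly enlarged neighborhood of $\overline{U_j}$, both a point close to $0$ and a chordally separated collection of large-modulus values; Miniowitz then forces $\phi_k(U_j)\supset U_l$ for some $l\in\{1,\ldots,q\}$ once $k$ is large, which unscales to (i). For (ii), the same strategy applied at different normalizations of the family $f(R_k\,\cdot\,)/s$ covers some $Q_i(s)$ for $s$ in the bounded sub-range; the upper end $s$ comparable with $M(R_k,f)$ follows essentially from Lemma~\ref{lem:Qj}, using that each $R_k U_j$ contains a large enough sub-region on which that argument can be reproduced.

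For (iii), I would deduce $R_k\overline{U_j}\cap J(f)\ne\emptyset$ by applying Lemma~\ref{lem:Julia2} to the disjoint domains $R_k U_1,\ldots,R_k U_q$, augmented if necessary by further disjoint domains inside $\R^d\setminus\bigcup_j R_k U_j$ (for instance, shells or cones drawn from the rich covering afforded by (ii)) so as to make the hypothesis $p>K_I(f)+q$ hold. The main obstacle is executing the normal-families argument of the second paragraph rigorously: the restricted family $\phi_k|_{U_j}$ could a priori be normal with a constant-zero subsequential limit, in which case $\phi_k(U_j)$ shrinks to $\{0\}$ and yields no useful covering. Ruling this out requires combining the failure of the pits effect at intermediate scales with the rapid growth of $M(R_k,f)$ from Lemma~\ref{lem:M(Ar)}, exactly as in the analogous argument in \cite[\S 3]{BN}; piecing together the full range of scales $s\in[1,M(R_k,f)]$ in (ii) also requires a careful interpolation between the rescaled-normal-families regime (small $s$) and the regime where Lemma~\ref{lem:Qj} is essentially sufficient (large $s$).
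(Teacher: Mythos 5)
There is a genuine gap, and you have in fact pointed at it yourself without closing it. You take $U_j=B(z_j,\rho)$ around limits of the \emph{small-value} points only, and then hope that non-normality of $\phi_k(x)=f(R_kx)/R_k$ on $U_j$ comes from $M(cR_k,f)/R_k\to\infty$; but that growth happens on the sphere $|x|=c$, which need not meet $U_j$, so nothing rules out $\phi_k|_{U_j}$ converging locally uniformly to $0$. The paper's key idea, which is the step you defer to \cite{BN} but never supply, is to build each $U_j$ so that it contains \emph{both} types of points: fix distinct radii $1<r_1<\dots<r_p<c$, choose maximum-modulus points $y_j^k$ with $|y_j^k|=r_j$ and $|f(R_ky_j^k)|=M(R_kr_j,f)$, pass to limits $y_j$, join $x_j$ to $y_j$ by pairwise disjoint curves $\gamma_j$ in the shell, and let $U_j$ be a small neighbourhood of $\gamma_j$. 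Then $|g_k(x_j^k)|\le 1/R_k$ and $|g_k(y_j^k)|\to\infty$ with both base points converging into $U_j$, so no subsequence is normal on any $U_j$, and Miniowitz (and Lemma~\ref{lem:Q-Minio} with the rescalings $F_n=f(R_{k_n}\cdot)/s_n$) gives (i) and the uniform statement (ii) for all $s\in[1,M(R_k,f)]$ in one contradiction argument — no interpolation between ``small $s$'' and ``large $s$'' regimes is needed, and indeed your sketch of (ii) inherits the same defect, since it again needs large values of $f(R_k\cdot)$ inside $R_kU_j$.

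The second gap concerns (iii). Lemma~\ref{lem:Julia2} needs $p>K_I(f)+q$ pairwise disjoint domains such that the image of \emph{each} of them covers at least $p-q$ of the collection; with only $q$ domains its hypothesis cannot be met, and your proposal to ``augment'' by extra shells or cones does not work as stated, because you would have to verify the covering property for those auxiliary domains as well, which your construction does not provide. The paper avoids this by applying the negation of the pits effect with the fixed integer $N=p>K_I(f)+q$ from the outset (note also that your formulation ``cannot be covered by $k$ balls of radius $\varepsilon_0R_k$'' with a single $\varepsilon_0$ and $c$ for all $k$ is stronger than the actual negation, which only gives, for each fixed $N$, some $c$, $\varepsilon$ and a sequence $R_k$). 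One then gets $p$ separated small-value points, hence $p$ domains $U_1,\dots,U_p$ as above, each of whose rescaled images covers at least $p-q+1$ of them; Lemma~\ref{lem:Julia2} applies directly to $R_kU_1,\dots,R_kU_p$, and the lemma's conclusion is read off for the first $q$ of these domains.
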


\begin{proof}
Let $p$ be an integer satisfying $p>K_I(f)+q$. Since $f$ does not have the pits effect, there exist $c>1$ and $\varepsilon>0$ and a sequence $R_k\to\infty$ such that
\[ \{x\in\R^d : R_k\le|x|\le cR_k,\ |f(x)|\le 1\} \]
cannot be covered by  $p$ balls of radius $\varepsilon R_k$. Equivalently, by denoting \[{A=\{x\in\R^d : 1\le|x|\le c\},}\] the set $\{x\in A : |f(R_k x)|\le 1\}$ cannot be covered by $p$ balls of radius $\varepsilon$. Thus there exist $x_1^k, \ldots, x_p^k \in A$ satisfying $|x_i^k - x_j^k|\ge\varepsilon$ for $i\ne j$ such that ${|f(R_kx_j^k)|\le 1}$ for $j\in\{1,\ldots,p\}$.

Passing to a subsequence if necessary, we may assume that the sequences $(x_j^k)_{k\in\N}$ converge, say $x_j^k\to x_j$ as $k\to\infty$. Then $|x_i - x_j|\ge\varepsilon$ for $i\ne j$. We fix $r_1,\ldots,r_p$ satisfying $1<r_1<r_2<\ldots<r_p<c$ and choose $y_j^k$ such that $|y_j^k|=r_j$ and $|f(R_ky_j^k)|=M(R_kr_j,f)$. Again we may assume that the sequences $(y_j^k)_{k\in\N}$ converge, say $y_j^k\to y_j$ as $k\to\infty$. We may choose pairwise disjoint curves $\gamma_j$ in $A$ which connect $x_j$ with $y_j$ and small neighbourhoods $U_j$ of the curves $\gamma_j$ such that the closures $\overline{U_j}$ are pairwise disjoint and contained in $\{x\in\R^d : |x|\ge1/2\}$.

Consider the sequence of functions $g_k\colon\R^d\to\R^d$ given by
\[ g_k(x) = \frac{f(R_kx)}{R_k}. \]
Note that
\[ |g_k(x_j^k)| = \frac{|f(R_kx_j^k)|}{R_k} \le \frac{1}{R_k}, \]
while
\[ |g_k(y_j^k)| = \frac{|f(R_ky_j^k)|}{R_k} = \frac{M(R_kr_j,f)}{R_k} \to \infty \]
as $k\to\infty$ by using Lemma~\ref{lem:M(Ar)}. Since $x_j^k\to x_j \in U_j$ and $y_j^k\to y_j \in U_j$ as $k\to\infty$, it follows that no subsequence of $(g_k)$ is normal on any of the domains $U_j$. Thus we may deduce from Lemma~\ref{lem:Minio} that, if $k$ is large  and if $j\in\{1,\ldots, p\}$, then $g_k(U_j) \supset U_l$ for at least $p-q+1$ values of $l\in\{1,\ldots,p\}$. After discarding initial terms of $(R_k)$ and relabelling, we obtain that if $k\in\N$ and $j\in\{1,\ldots,p\}$, then $f(R_kU_j)\supset R_kU_l$ for at least $p-q+1$ values of $l\in\{1,\ldots,p\}$. This immediately implies that (i) holds and applying Lemma~\ref{lem:Julia2} to the domains $R_kU_1,\ldots,R_kU_p$ yields (iii).

As we are free to discard further initial terms of $(R_k)$, it will suffice to prove that (ii) holds for all large integers $k$. Suppose that this is not the case. Then there must exist a subsequence $(R_{k_n})$, an integer $j\in\{1,\ldots,q\}$ and values ${s_n\in[1,M(R_{k_n},f)]}$ such that $f(R_{k_n}U_j)$ omits a value in each of ${Q_1(s_n),\ldots, Q_{2q}(s_n)}$. Let $F_n\colon U_j\to\R^d$ be defined by
\[ F_n(x) = \frac{f(R_{k_n}x)}{s_n}. \]
Then every function $F_n$ omits a value in each of the sets $Q_1(1),\ldots, Q_{2q}(1)$. Therefore the family $\{F_n:n\in\N\}$ is normal on $U_j$ by Lemma~\ref{lem:Q-Minio}.

On the other hand, we have that
\[ |F_n(x_j^{k_n})| = \frac{|f(R_{k_n}x_j^{k_n})|}{s_n} \le \frac{1}{s_n} \le 1. \]
Also
\[ |F_n(y_j^{k_n})| = \frac{|f(R_{k_n}y_j^{k_n})|}{s_n} = \frac{M(R_{k_n}r_j,f)}{s_n} \ge \frac{M(R_{k_n}r_j,f)}{M(R_{k_n},f)}, \]
and so $|F_n(y_j^{k_n})|\to\infty$ as $n\to\infty$ by Lemma~\ref{lem:M(Ar)} because $r_j>1$ and $R_{k_n}\to\infty$. Since $x_j^{k_n}\to x_j \in U_j$ and $y_j^{k_n}\to y_j \in U_j$ as $n\to\infty$, this stands in contradiction to the normality of the family $\{F_n:n\in\N\}$.
\end{proof}

We are now ready to prove Theorem~\ref{thm1} under the assumption that $f$ is a transcendental type quasiregular mapping that does not have the pits effect. We shall do this by using the results established above to choose sets $A_\nu^{(j)}$ that satisfy all the hypotheses of the holding-up Lemma~\ref{lem:holdup}. We begin by taking a sequence $(R_k)$ and domains $U_1,\ldots,U_q$ as in Lemma~\ref{lem:Uj}. We may assume that $R_{k+1}>M(R_k,f)$
and that $R_1\ge1$ is large enough that $M^L(R_1,f)\to\infty$ as $L\to\infty$.
This means that for each $k\in\N$ there is a least integer $L_k\ge2$ such that $M^{L_k}(R_k,f)\ge R_{k+1}$. Define the sequence $(\nu_k)$ inductively by $\nu_1=1$ and $\nu_{k+1}=\nu_k+L_k$ for $k\ge1$. Then let
\[ A_{\nu_k}^{(j)} = R_kU_j \]
for $j\in\{1,\ldots,q\}$. (Additionally, for $j\in\{q+1,\ldots,2q\}$ we set $A_{\nu_k}^{(j)} = A_{\nu_k}^{(1)}$, say, simply to keep the number of sets consistent at a later stage.) Parts~(i) and~(iii) of Lemma~\ref{lem:Uj} immediately show that these sets  satisfy conditions \eqref{Hc} and \eqref{Hd} of Lemma~\ref{lem:holdup} with $p=2q$. It remains to choose the rest of the sets $A_\nu^{(j)}$ and then to verify \eqref{Ha} and \eqref{Hb}.

Note that the iterated maximum modulus function $M^{L_k-1}(r,f)$ is continuous in $r$ and that
\[ M^{L_k-1}(R_k,f) \le R_{k+1} \le M^{L_k}(R_k, f) \]
by our choice of $L_k$. Thus, by the intermediate value theorem, there exists $S_k\in[R_k, M(R_k,f)]$ such that $M^{L_k-1}(S_k,f)=R_{k+1}$. For integers $\nu$ in the range $\nu_k<\nu<\nu_{k+1}$, we now define
\[ A_\nu^{(j)} = Q_j(M^{\nu-\nu_k-1}(S_k,f)) \]
for $j\in\{1,\ldots,2q\}$. Next, we verify that the covering condition  \eqref{Hb} is satisfied by considering three cases.
\begin{enumerate}
	\item If $\nu=\nu_k$ for some $k$, then for each $j\in\{1,\ldots,q\}$ we have
	\[ f(A_\nu^{(j)}) = f(R_kU_j) \supset Q_i(S_k) = A_{\nu_k+1}^{(i)} \]
	for some $i\in\{1,\ldots,2q\}$ by Lemma~\ref{lem:Uj}(ii). (And for $q<j\le2q$ recall that we took $A_{\nu_k}^{(j)}=A_{\nu_k}^{(1)}$.)
	\item If $\nu_k<\nu<\nu_{k+1}-1$ for some $k$, then for each $j\in\{1,\ldots,2q\}$ we have
	\[ f(A_\nu^{(j)}) = f(Q_j(M^{\nu-\nu_k-1}(S_k,f))) \supset Q_i(M^{\nu-\nu_k}(S_k,f)) = A_{\nu+1}^{(i)} \]
	for some $i\in\{1,\ldots,2q\}$ by \eqref{Qj*} of Lemma~\ref{lem:Qj}.
	\item If $\nu=\nu_{k+1}-1$ for some $k$, then
	\[ A_\nu^{(j)} = Q_j(M^{\nu_{k+1}-\nu_k-2}(S_k,f)) = Q_j(M^{L_k-2}(S_k,f)) \]
	and so, for each $j\in\{1,\ldots,2q\}$ we have
	\[ f(A_\nu^{(j)}) \supset M^{L_k-1}(S_k,f)U_l = R_{k+1}U_l = A_{\nu_{k+1}}^{(l)} = A_{\nu+1}^{(l)}  \]
	for some $l\in\{1,\ldots,q\}$ by using \eqref{Qj**} of Lemma~\ref{lem:Qj} and recalling the definition of $S_k$.
\end{enumerate}
We have therefore shown that \eqref{Hb} holds for all $\nu\in\N$. Finally, we must prove that the sets $A_\nu=\bigcup_{j=1}^{2q} A_\nu^{(j)}$ tend to infinity in the sense of \eqref{Ha}. To see this, we note first that
\[ \inf \{|x| : x\in A_{\nu_k}\} = \inf \left\{|x| : x\in \bigcup_{j=1}^{2q} R_kU_j \right\} \ge \frac{R_k}{2} \]
since $U_j\subset \{x\in\R^d : |x|\ge1/2\}$ (see Lemma~\ref{lem:Uj}). Recalling the definition of $Q_j(r)$ reveals that $\inf \{|x| : x\in Q_j(r)\}=r/4$ and hence, for $\nu_k<\nu<\nu_{k+1}$,
\[ \inf\{|x| : x\in A_{\nu}\} = \frac{M^{\nu-\nu_k-1}(S_k,f)}{4} \ge \frac{S_k}{4} \ge \frac{R_k}{4}. \]
Therefore \eqref{Ha} is satisfied because $(R_k)$ tends to infinity.

An application of Lemma~\ref{lem:holdup} now concludes the proof of Theorem~\ref{thm1} for functions without the pits effect.

\section{Proof of Theorem~\ref{thm2}}\label{sect:thm2}

Theorem~\ref{thm2} will be proved by using a modified version of the covering result Lemma~\ref{lem:Qj}, in which the sets $Q_j(r)$ will be replaced by similar sets that lie in increasingly thin spherical shells. By finding a point $x$ whose forward orbit runs through such sets, we will achieve that $|f^n(x)|=(1+o(1))M^n(R,f)$ as $n\to\infty$.

We begin with a definition similar to the one at the start of Section~\ref{sect:no-pits}. Let $q$ be an integer and fix $2q$ distinct unit vectors $v_1,\ldots, v_{2q}$ and a small angle $\theta >0$. For $j\in\{1,\ldots,2q\}$ and $k\in\N$, define the truncated cone
\[ C_{j,k} = \left\{x\in\R^d : 1-\frac{1}{k+1} < |x| < 1+\frac{1}{k}, \ \frac{v_j\cdot x}{|x|}>\cos\theta \right\}. \]
Since $\theta$ is small, the $2q$ sets $C_{1,k},\ldots,C_{2q,k}$ have disjoint closures for each fixed $k\in\N$.
The reader only interested in the case of transcendental entire functions on the complex plane may simply take $q=2$, any angle $\theta\in (0,\pi/4)$ and ${C_{j,k} = \left\{z\in\C : 1-\frac{1}{k+1} < |z| < 1+\frac{1}{k}, \ |\arg(z)-j\pi/2|<\theta \right\}}$.
Now define, for $r>0$,
\[ Q_{j,k}(r) = A\left(\left(1+\frac{j-1/2}{2qk}\right)r, \left(1+\frac{j}{2qk}\right)r\right) \cup rC_{j,k} \]
and observe that
\eqn \label{QinA}
Q_{j,k}(r) \subset A\left((1-\tfrac{1}{k+1})r, (1+\tfrac{1}{k})r\right),
\eqnend
so that for large values of $k$ the modulus of any point in $Q_{j,k}(r)$ is approximately~$r$. The following covering result is essentially just two copies of \eqref{Qj*} of Lemma~\ref{lem:Qj}.

\begin{lem}\label{lem:Qjk}
Let $f\colon \R^d\to\R^d$ be $K$-quasiregular of transcendental type and let $q=q(d,K)$ be Rickman's constant. Then for each $k\in\N$ there exists $r_k>0$ such that, for every $r\ge r_k$ and $j\in\{1,\ldots,2q\}$,
\eqn \label{Qalpha}
f(Q_{j,k}(r)) \supset Q_{i,k}(M(r,f)), \quad \mbox{for some } i\in\{1,\ldots,2q\},
\eqnend
and
\eqn  \label{Qbeta}
f(Q_{j,k}(r)) \supset  Q_{i',k+1}(M(r,f)), \quad \mbox{for some } i'\in\{1,\ldots,2q\}.
\eqnend
\end{lem}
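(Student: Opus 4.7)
The plan is to imitate the proof of Lemma~\ref{lem:Qj}, replacing the sets $Q_j(1)$ with the thinner sets $Q_{j,k}(1)$. Fix $k\in\N$ and $j\in\{1,\ldots,2q\}$, and for $r>0$ consider the rescaling $h_r\colon Q_{j,k}(1)\to\R^d$ defined by $h_r(x)=f(rx)/M(r,f)$. The first step will be to exhibit, inside $Q_{j,k}(1)$, both a point where $|h_r|$ remains bounded and a sequence of points where $|h_r|$ blows up. For the former, I will take the fixed point $x_0=\left(1-\frac{1}{2(k+1)}\right)v_j\in C_{j,k}\subset Q_{j,k}(1)$: since $|rx_0|\le r$, we get $|h_r(x_0)|\le 1$ for all $r>0$. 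For the latter, I will pick $y_r$ on the sphere of radius $\rho:=1+\frac{j-1/4}{2qk}$ (which lies in the annular part of $Q_{j,k}(1)$) with $|f(ry_r)|=M(\rho r,f)$, so that $|h_r(y_r)|=M(\rho r,f)/M(r,f)\to\infty$ by Lemma~\ref{lem:M(Ar)}.

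Next I will show that, for any sequence $r_n\to\infty$, the family $\{h_{r_n}\}$ fails to be normal on $Q_{j,k}(1)$. The set $Q_{j,k}(1)$ is a connected bounded domain (the cone and shell pieces overlap). If a subsequence of $\{h_{r_n}\}$ converged locally uniformly in the chordal metric, the limit would either be a finite quasimeromorphic map on $Q_{j,k}(1)$ or identically $\infty$ on it. Passing to a further subsequence we may assume $y_{r_n}\to y_0$, a point still on the sphere of radius $\rho$ and hence in $Q_{j,k}(1)$. The first possibility is ruled out because $|h_{r_n}(y_{r_n})|\to\infty$ and the second because $|h_{r_n}(x_0)|\le 1$.

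The third step is to establish analogues of Lemma~\ref{lem:Q-Minio} with $Q_1(1),\ldots,Q_{2q}(1)$ replaced by either $Q_{1,k}(1),\ldots,Q_{2q,k}(1)$ or $Q_{1,k+1}(1),\ldots,Q_{2q,k+1}(1)$. The proof will copy the pigeonhole-plus-Miniowitz argument of Lemma~\ref{lem:Q-Minio}: given any choice of one point from each $Q_{i,k}(1)$, at least $q$ of them lie in annular pieces or at least $q$ lie in cones, and in either case these $q$ points are pairwise separated by some $\varepsilon_k>0$ (the annular pieces are pairwise separated by at least $1/(4qk)$, and the cones by a positive constant depending on $\theta$ and $k$). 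Since $Q_{j,k}(1)$ is bounded away from $\infty$, these $q$ points together with $\infty$ give $q+1$ values with pairwise chordal distances bounded below, so Lemma~\ref{lem:Minio} applies.

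The conclusion is then a standard contradiction argument, applied separately to obtain \eqref{Qalpha} and \eqref{Qbeta}. Were \eqref{Qalpha} to fail for arbitrarily large $r$, one could pick a sequence $r_n\to\infty$ along which $h_{r_n}$ omits a value in every $Q_{i,k}(1)$; the analogue of Lemma~\ref{lem:Q-Minio} would declare $\{h_{r_n}\}$ normal on $Q_{j,k}(1)$, contradicting the non-normality established in step two. The identical argument using the sets $Q_{i',k+1}(1)$ gives \eqref{Qbeta}. Choosing $r_k$ large enough that both conclusions hold for every $j\in\{1,\ldots,2q\}$ and every $r\ge r_k$ finishes the proof. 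The main obstacle I anticipate is the third step: one must verify carefully that even though the annular parts of $Q_{j,k}(1)$ shrink with $k$, the pairwise separation remains positive for any \emph{fixed} $k$, so that the Miniowitz-type normality conclusion still applies; this is why the bound $r_k$ is allowed to depend on $k$.
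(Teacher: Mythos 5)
Your proposal is correct and follows essentially the same route as the paper's proof: the rescaled family $h_r(x)=f(rx)/M(r,f)$ on $Q_{j,k}(1)$, non-normality along any sequence $r_n\to\infty$ obtained from a point of the cone of modulus less than $1$ together with a maximum-modulus point of modulus $1+\frac{j-1/4}{2qk}$, and the observation that Lemma~\ref{lem:Q-Minio} persists (with $k$-dependent separation constants) when the $Q_j(1)$ are replaced by $Q_{j,k}(1)$ or $Q_{j,k+1}(1)$, applied twice to yield \eqref{Qalpha} and \eqref{Qbeta}. The extra details you supply (connectedness of $Q_{j,k}(1)$, the explicit $1/(4qk)$ separation of the annular pieces) are accurate refinements of what the paper leaves implicit.
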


\begin{proof}
Note first that Lemma~\ref{lem:Q-Minio} remains valid if the sets $Q_1(1),\ldots,Q_{2q}(1)$ are replaced by $Q_{1,k}(1),\ldots,Q_{2q,k}(1)$ for any $k\in\N$.

We now fix some $k\in\N$ and $j\in\{1,\ldots,2q\}$ and follow the proof of Lemma~\ref{lem:Qj} to show that \eqref{Qalpha} and \eqref{Qbeta} hold for all sufficiently large $r$. In detail, we again define $h_r\colon Q_{j,k}(1)\to\R^d$ by
\[ h_r(x) = \frac{f(rx)}{M(r,f)} \]
and note that we have, for example, $a=(1-\frac{1}{4k})v_j\in Q_{j,k}(1)$ and
\eqn \label{Qgamma}
 |h_r(a)| \le \frac{M((1-\frac{1}{4k})r,f)}{M(r,f)} \to 0 \ \mbox{ as } r\to\infty
\eqnend
by Lemma~\ref{lem:M(Ar)}. We can also find $y_r\in Q_{j,k}(1)$ with $|y_r|=1+\frac{j-1/4}{2qk}$ and $|f(ry_r)|=M(r|y_r|, f)$. Then, since $|y_r|\ge 1+\frac{3}{8qk}>1$,
\[ |h_r(y_r)| =  \frac{M(r|y_r|,f)}{M(r,f)}\to \infty \quad\mbox{as } r\to\infty \]
by Lemma~\ref{lem:M(Ar)}. Together with \eqref{Qgamma}, this shows that the family $\{h_r:r>0\}$ is not normal on $Q_{j,k}(1)$. In fact, for any real sequence $s_n\to\infty$, the family $\{h_{s_n}\}$ is not normal. As remarked above, Lemma~\ref{lem:Q-Minio} holds for the sets $Q_{1,k}(1),\ldots,Q_{2q,k}(1)$ and also for the sets $Q_{1,k+1}(1),\ldots,Q_{2q,k+1}(1)$. We therefore deduce that, for all large $r$, there exist $i,i'\in\{1,\ldots,2q\}$ such that $h_r(Q_{j,k}(1))\supset Q_{i,k}(1)$ and $h_r(Q_{j,k}(1))\supset Q_{i',k+1}(1)$, from which \eqref{Qalpha} and \eqref{Qbeta} follow.
\end{proof}

\begin{proof}[Proof of Theorem~\ref{thm2}]
Let $f$ be quasiregular of transcendental type and let $q$ be Rickman's constant. For clarity we will assume that $c=2$; a similar argument can be used in the general case. Take $r_1,r_2,\ldots$ as in Lemma~\ref{lem:Qjk}. By increasing $r_1$ if necessary, we may assume that $M(r,f)>r$ for all $r\ge r_1$. Let $R\ge r_1$.

We aim to choose a sequence of sets $E_n$ of the form $Q_{j,k_n}(M^n(R,f))$ such that $f(E_n)\supset E_{n+1}$ and
\eqn \label{Mn>rkn}
M^n(R,f) \ge r_{k_n}.
\eqnend
We begin by setting $E_0=Q_{1,1}(R)$, which satisfies \eqref{Mn>rkn} because $M^0(R,f)=R\ge r_1 = r_{k_0}$. We proceed by induction. Suppose that $E_n= Q_{j,k_n}(M^n(R,f))$ has been chosen such that \eqref{Mn>rkn} holds. This means that Lemma~\ref{lem:Qjk} may be applied to the set $E_n$.
\begin{itemize}
\item[\emph{Case 1}] If $M^{n+1}(R,f)\ge r_{k_n+1}$, then take $E_{n+1}=Q_{i',k_n+1}(M^{n+1}(R,f))$, where $i'\in\{1,\ldots,2q\}$ is chosen using \eqref{Qbeta} to give that $f(E_n)\supset E_{n+1}$. In this case, $k_{n+1}=k_n+1$ and \eqref{Mn>rkn} holds for $n+1$ because $M^{n+1}(R,f) \ge r_{k_n+1}=r_{k_{n+1}}$.
\item[\emph{Case 2}] Otherwise, we take $E_{n+1}=Q_{i,k_n}(M^{n+1}(R,f))$, where $i\in\{1,\ldots,2q\}$ is chosen using \eqref{Qalpha} to give that $f(E_n)\supset E_{n+1}$. In this case, $k_{n+1}=k_n$ and \eqref{Mn>rkn} holds for $n+1$ because $M^{n+1}(R,f) \ge M^n(R,f) \ge r_{k_n} = r_{k_{n+1}}$.
\end{itemize}
Observe that Case 1 must occur infinitely often because $M^n(R,f)\to\infty$ as $n\to\infty$. Thus $k_n\to\infty$ as $n\to\infty$.

We now apply Lemma~\ref{lem:En} to obtain a point $x$ such that $f^n(x)\in \overline{E_n}$ for all $n\ge 0$. In particular, $x\in \overline{E_0}=\overline{Q_{1,1}(R)}$ and so $R/2\le |x| \le 2R$ by~\eqref{QinA}. Moreover, \eqref{QinA} also yields
\[ \left(1-\frac{1}{k_n+1}\right)M^n(R,f) \le |f^n(x)| \le \left(1+\frac{1}{k_n}\right)M^n(R,f). \]
Since $k_n\to\infty$, this shows that the iterates $f^n(x)$ escape to infinity at the precise rate asserted in the statement of the theorem.
\end{proof}

\section{Proof of Theorem~\ref{thm3}}\label{sect:thm3}

We begin the proof of Theorem~\ref{thm3} by using the argument from the end of Section~5 of \cite{RSSlow} to prove quickly that statement~(a) implies statement~(c). To do this, suppose that (c) does not hold. Then we can find a sequence of spherical shells $A(r_n,R_n)$, where $0<r_n<R_n$, such that $r_n\to\infty$ and $R_n/r_n\to\infty$ as $n\to\infty$ and
\[ |f(x)| > M(|x|,f)^\frac{1}{2} \quad \mbox{ for } x\in A(r_n,R_n). \]
Note that Lemma~\ref{lem:M(Ar)} implies that $M(r,f)^\frac{1}{2}/r\to\infty$ as $r\to\infty$. Hence, it is not possible to satisfy (a) for any positive sequence $a_n$ that has a subsequence $a_{n_j}$ such that $a_{n_j}=a_{n_j+1}=r_j$.

The proof that (b) implies (a) is given in the next subsection. Finally, in Subsection~\ref{sect:c-implies-a}, Harnack's inequality will be used to prove that (c) implies~(b).

\subsection{Proof that (b) implies (a)}

We use the following covering lemma (cf. \cite[Lemma~8]{RSSlow}), the proof of which is similar to that of Lemmas~\ref{lem:Qj} and \ref{lem:Uj} above. We adopt the notation $\overline{A}(r,s)=\{x\in\R^d : r\le |x| \le s\}$.

\begin{lem}\label{lem:shells}
Let $f\colon \R^d\to\R^d$ be a $K$-quasiregular map of transcendental type and let $p\in\N$ be such that $p\ge q$, where $q=q(d,K)$ is Rickman's constant. Suppose that $c,\lambda >0$ and $L>1$. Then there exists $R_0\ge 1$ with the following property: If $r\ge R_0$, $\alpha\ge 1$,
\eqn \label{eqn:1<R<lambdaM}
 1 \le R \le \lambda M(r,f) \quad \mbox{ and } \quad \min\{|f(x)| : x\in \overline{A}(2\alpha r, 2L\alpha r) \} \le c,
\eqnend
then
\[ f(A(\alpha r, 4L\alpha r)) \supset A((8L)^{i-1}R, ((8L)^i/2)R) \]
for at least $p-q+1$ values of $i\in\{1,\ldots,p\}$.
\end{lem}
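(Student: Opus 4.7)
I would argue by contradiction and rescaling, in the spirit of the proofs of Lemma~\ref{lem:Qj} and Lemma~\ref{lem:Uj}. If the lemma failed, one could extract sequences $r_n\to\infty$, $\alpha_n\ge 1$ and $R_n\in[1,\lambda M(r_n,f)]$, all satisfying the min-hypothesis, for which the covering conclusion fails. Pigeonholing on subsets of $\{1,\ldots,p\}$, after passing to a subsequence there is a fixed $S\subset\{1,\ldots,p\}$ with $|S|\ge q$ such that, for every $n$ and every $i\in S$, $f(A(\alpha_n r_n,4L\alpha_n r_n))$ misses some value in $A((8L)^{i-1}R_n,(8L)^iR_n/2)$.

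Rescale by setting $g_n(x)=f(\alpha_n r_n x)/R_n$ on the annulus $A(1,4L)$; each $g_n$ is $K$-quasiregular and, by the failure, omits a value $a_{n,i}\in A((8L)^{i-1},(8L)^i/2)$ for each $i\in S$. These $q$ target annuli are pairwise disjoint (for $i<j$ the Euclidean gap is at least $(8L)^{j-1}-(8L)^i/2\ge 4L$) and all lie in $\overline{B}(0,(8L)^p/2)$, so the collection $\{a_{n,i}:i\in S\}\cup\{\infty\}$ furnishes $q+1$ omitted values whose pairwise chordal distances are bounded below by some $\varepsilon=\varepsilon(p,L)>0$. Lemma~\ref{lem:Minio} then forces $\{g_n\}$ to be normal on $A(1,4L)$.

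To contradict this normality, the min-hypothesis provides $\xi_n\in\overline{A}(2,2L)$ with $|g_n(\xi_n)|\le c/R_n\le c$, while choosing $\eta_n$ with $|\eta_n|=2$ and $|f(\alpha_n r_n\eta_n)|=M(2\alpha_n r_n,f)$ gives, via $R_n\le\lambda M(r_n,f)$, $\alpha_n\ge 1$ and Lemma~\ref{lem:M(Ar)},
\[ |g_n(\eta_n)|\ge\frac{M(2r_n,f)}{\lambda M(r_n,f)}\longrightarrow\infty. \]
Passing to a convergent subsequence $g_n\to g$ in the chordal metric, locally uniformly on $A(1,4L)$, with $\xi_n\to\xi_\infty$ and $\eta_n\to\eta_\infty$ inside the compact subset $\overline{A}(2,2L)\subset A(1,4L)$, the limit $g$ is quasimeromorphic, finite at $\xi_\infty$ (so $g\not\equiv\infty$), yet equal to $\infty$ at $\eta_\infty$. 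This contradicts the Hurwitz-type fact that a locally-uniform chordal limit of quasiregular mappings must either avoid $\infty$ itself or be identically $\infty$.

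The main obstacle is this last Hurwitz-style step -- ruling out a pole of the normal-family limit $g$ at $\eta_\infty$. The same principle underlies the contradictions at the ends of the proofs of Lemmas~\ref{lem:Qj} and~\ref{lem:Uj}, and I would simply invoke it here; beyond that, the argument is a careful bookkeeping of the chordal separations so that Lemma~\ref{lem:Minio} applies uniformly in $n$.
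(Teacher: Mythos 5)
Your proposal is correct and follows essentially the same route as the paper: assume the conclusion fails along sequences, rescale to $g_n(x)=f(\alpha_n r_n x)/R_n$ on $A(1,4L)$, get normality from Lemma~\ref{lem:Minio} via omitted values in the separated annuli $A((8L)^{i-1},(8L)^i/2)$, and contradict it using the min-hypothesis together with $M(2\alpha_n r_n,f)/R_n\ge M(2r_n,f)/(\lambda M(r_n,f))\to\infty$ from Lemma~\ref{lem:M(Ar)}. The only cosmetic differences are your (unneeded but harmless) pigeonholing to fix the failing index set and your making explicit the standard Hurwitz/Weierstrass-type fact for quasiregular limits, which the paper, as in its Lemmas~\ref{lem:Qj} and~\ref{lem:Uj}, leaves implicit in the phrase ``contradict the normality''.
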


\begin{proof}
Suppose that no such $R_0$ exists. Then there exist real sequences $(r_k)$, $(\alpha_k)$ and $(R_k)$ such that $r_k\to\infty$ as $k\to\infty$ and, for all $k$,
\begin{itemize}
\item $\alpha_k \ge 1$;
\item \eqref{eqn:1<R<lambdaM} is satisfied with $R=R_k$, $r=r_k$ and $\alpha=\alpha_k$;
\item but $f(A(\alpha_k r_k, 4L\alpha_k r_k))$ does not contain $A((8L)^{i-1}R_k, ((8L)^i/2)R_k)$ for at least $q$ choices of $i\in\{1,\ldots,p\}$.
\end{itemize}
Let $g_k\colon A(1,4L)\to\R^d$ be the $K$-quasiregular map defined by
\[ g_k(x) = \frac{f(\alpha_kr_k x)}{R_k}. \]
Then $g_k$ omits a point in $A((8L)^{i-1}, (8L)^i/2)$ for at least $q$ values of $i\in\{1,\ldots,p\}$. It now follows from Lemma~\ref{lem:Minio} that the family $\{g_k:k\in\N\}$ is normal on $A(1,4L)$.

On the other hand, we see that
\eqn \label{eqn:min(g_k)}
\min \{|g_k(x)| : x\in \overline{A}(2,2L) \} \le c/R_k \le c
\eqnend
while also
\eqn \label{eqn:M(2,g_k)}
M(2,g_k) = \frac{M(2\alpha_k r_k, f)}{R_k} \ge \frac{M(2r_k,f)}{\lambda M(r_k,f)}.
\eqnend
By Lemma~\ref{lem:M(Ar)}, this last term tends to infinity as $k\to\infty$. Therefore, \eqref{eqn:min(g_k)} and \eqref{eqn:M(2,g_k)} together contradict the normality of the family $\{g_k\}$.
\end{proof}

We now assume that $f$ is a transcendental type $K$-quasiregular map such that statement~(b) in Theorem~\ref{thm3} holds for some $L$, $c$ and $(x_n)$. We aim to show that statement~(a) also holds.

Let $(a_n)$ be a positive sequence as in (a). Fix an integer $p>K_I(f)+q(K,d)$ and take $\lambda>0$ and $N\in\N$ such that, for all $n\ge N$,
\[ 1 \le a_{n+1} \le \lambda M(a_n, f) \quad \mbox{ and } \quad a_n \ge \max\{|x_1|, R_0\}, \]
where $R_0$ is given by Lemma~\ref{lem:shells}. Write $\tau=8L$. Now, whenever $\alpha\ge 1$ and $n\ge N$, the set $\overline{A}(2\alpha a_n, 2L\alpha a_n)$ must contain one of the points $x_m$ and thus, by applying Lemma~\ref{lem:shells} with $r=a_n$ and $R=a_{n+1}$, we find that there exists some $i\in\{1,\ldots,p\}$ such that
\[ f(A(\alpha a_n, (\tau/2)\alpha a_n)) \supset A( \tau^{i-1}a_{n+1}, (\tau^i/2)a_{n+1}). \]
It follows that we may choose a sequence of sets $(E_n)_{n\ge N}$ of the form
\[ E_n = A(\beta_n a_n, (\tau/2)\beta_n a_n), \quad \mbox{ where } \beta_n\in\{1,\tau,\ldots,\tau^{p-1}\}, \]
such that $f(E_n)\supset E_{n+1}$. Moreover, Lemma~\ref{lem:shells} also gives that, for $n\ge N$ and each $j\in\{1,\ldots,p\}$,
\[ f(A(\tau^{j-1}a_n, (\tau^j/2)a_n)) \supset A(\tau^{i-1}a_n, (\tau^i/2)a_n), \]
for at least $p-q+1$ values $i\in\{1,\ldots,p\}$. Hence, by Lemma~\ref{lem:Julia2}, we have in particular that $\overline{E_n}\cap J(f) \ne \emptyset$ for $n\ge N$. Therefore, we may apply Lemma~\ref{lem:En} to obtain a point $\zeta_N\in J(f)$ such that $f^{n-N}(\zeta_N)\in\overline{E_n}$ for $n\ge N$. It follows that, for $n\ge N$,
\[ a_n \le |f^{n-N}(\zeta_N)| \le (\tau^p/2) a_n. \]

Without loss of generality, we may assume that $\zeta_{N}\notin\mathcal{E}(f)$ (otherwise we increase~$N$). Thus, by applying Lemma~\ref{lem:Rickman} finitely many times, we can find $\zeta$ such that $f^{N}(\zeta)=\zeta_{N}$ and (a) is satisfied for some suitable choice of $C$.
Since $\zeta_N\in J(f)$, we also have $\zeta\in J(f)$ by the complete invariance of the Julia set.

\subsection{Proof that (c) implies (b)}\label{sect:c-implies-a}

We will use the following quasiregular version of Harnack's inequality; see \cite[Theorem VI.7.4, Corollary VI.2.8]{Rickman}.

\begin{lem}\label{lem:Harnack}
If $g\colon B(w,R)\to \R^d$ is $K$-quasiregular and $|g(x)|>1$ for all $x\in B(w,R)$, then, for $0<\rho < R$,
\[ \sup_{x\in B(w,\rho)} \log |g(x)| \le a \inf_{x\in B(w,\rho)} \log |g(x)|, \]
where $a=\exp(C_{d,K}(\log(R/\rho))^{-1})$ and the constant $C_{d,K}$ depends only on $d$ and~$K$.
\end{lem}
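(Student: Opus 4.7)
The plan is to reduce the statement to the classical Harnack inequality for non-negative weak solutions of a quasilinear elliptic equation of $d$-Laplace type, applied to the function $u=\log|g|$. First I would verify that, since $|g|>1$ throughout $B(w,R)$, the function $u$ is continuous, positive, and belongs to the Sobolev space $W^{1,d}_{\mathrm{loc}}(B(w,R))$; this follows because $g\in W^{1,d}_{\mathrm{loc}}$ by quasiregularity and $|g|$ is bounded away from zero, so the chain rule applies.

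The key technical ingredient is to show that $u$ is a weak solution of a quasilinear equation
\[ \operatorname{div}\mathcal{A}(x,\nabla u)=0, \]
where the vector field $\mathcal{A}\colon B(w,R)\times\R^d\to\R^d$ satisfies structure conditions of $d$-Laplace type, namely coercivity $\mathcal{A}(x,\xi)\cdot\xi\ge\alpha|\xi|^d$ and growth $|\mathcal{A}(x,\xi)|\le\beta|\xi|^{d-1}$, with constants $\alpha,\beta$ depending only on $d$ and the dilatation $K$ of $g$. This is essentially the content of the cited Corollary VI.2.8 in Rickman's book and proceeds from a direct computation using the identity $\nabla\log|g|=Dg^{T}(g/|g|^{2})$ together with the defining inequalities for quasiregularity; the vector field $\mathcal{A}$ is then built from the dilatation tensor of $g$.

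Once $u$ is known to be $\mathcal{A}$-harmonic in this sense, the desired estimate follows from the classical Harnack inequality for non-negative solutions of such quasilinear equations (Serrin--Trudinger), proved via Moser iteration: a Caccioppoli energy estimate combined with the Sobolev embedding in $W^{1,d}$, applied first to positive powers of $u$ and then to $\log u$ via a cross-over lemma, yields $\sup_{B(w,\rho')}u\le C\inf_{B(w,\rho')}u$ on sufficiently small concentric balls. Chaining this estimate along a geometric cover of $B(w,\rho)$ by such balls inside $B(w,R)$ then produces a Harnack constant of the precise form $\exp(C_{d,K}(\log(R/\rho))^{-1})$; the inverse logarithm reflects the fact that the number of chained balls needed to connect two points in $B(w,\rho)$ without leaving $B(w,R)$ scales like $\log(R/\rho)$.

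The main obstacle I expect is the PDE step: establishing that $\log|g|$ is a weak solution with structure constants depending only on $d$ and $K$, and justifying the chain-rule manipulation on the exceptional set where $J_{g}=0$. This requires Reshetnyak's theorem on the a.e.\ differentiability and ACL property of quasiregular maps, so that $\nabla u$ is well-defined a.e.\ and the weak formulation can be tested against smooth compactly supported functions. Once this analytic foundation is in place, the Moser Harnack inequality may be invoked as a black box, and careful tracking of constants through the iteration completes the proof.
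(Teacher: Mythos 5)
The paper offers no argument for this lemma at all: it is quoted directly from Rickman's book (Theorem VI.7.4 together with Corollary VI.2.8), so the only fair comparison is with the content of those citations. Your first two steps do match that content: $u=\log|g|$ is positive, continuous, lies in $W^{1,d}_{\mathrm{loc}}$, and is a weak solution of an equation $\operatorname{div}\mathcal{A}(x,\nabla u)=0$ of $d$-Laplace type whose structure constants depend only on $d$ and $K$ (this is exactly the role of Corollary VI.2.8, via the pullback dilatation tensor of $g$).

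The genuine gap is in your final step. Moser/Serrin--Trudinger iteration plus a chain of balls yields $\sup_{B(w,\rho)}u\le C_0\inf_{B(w,\rho)}u$ with a constant $C_0=C_0(d,K)>1$ that is bounded away from $1$ (for, say, $\rho\le R/2$) and that \emph{increases} when you chain: chaining multiplies Harnack constants, so it can only make the constant worse, never push it towards $1$. By contrast, the constant in the lemma, $a=\exp\bigl(C_{d,K}(\log(R/\rho))^{-1}\bigr)$, tends to $1$ as $R/\rho\to\infty$, and this refined behaviour is precisely what the paper uses in Section~\ref{sect:c-implies-a}: there $\beta$ is chosen large so that $a<1/s$, which a fixed constant $C_0>1$ would not guarantee. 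Your heuristic that the inverse logarithm comes from a chain of length $\log(R/\rho)$ is therefore backwards (indeed, when $\rho\ll R$ two points of $B(w,\rho)$ are joined inside $B(w,2\rho)$ by $O(1)$ balls, yet this is exactly the regime where $a\to1$). The actual source of the $(\log(R/\rho))^{-1}$ is the conformally invariant exponent $p=d$: testing the equation for $u$ with $\varphi^d/u^{d-1}$, where $\varphi$ is the logarithmic cutoff adapted to the annulus $B(w,R)\setminus B(w,2\rho)$, gives $\int_{B(w,2\rho)}|\nabla\log u|^d\le C_{d,K}\,(\log(R/\rho))^{1-d}$ (the $d$-capacity of that condenser), and this smallness must then be converted into the oscillation bound $\operatorname{osc}_{B(w,\rho)}\log u\le C_{d,K}(\log(R/\rho))^{-1}$, which is equivalent to the stated inequality. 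That conversion (an estimate for $\log u$ going beyond what a generic $W^{1,d}$ bound gives) is the substantive analytic step carried out in Rickman VI.7, and it is missing from your outline; invoking the standard Harnack inequality as a black box does not recover the form of $a$ that the application requires.
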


In the complex transcendental entire case, the result we seek can be proved by applying Harnack's inequality to the composition $g=f \circ \exp$; see \cite[Lemma~5(a)]{RSSlow} and \cite[Lemma~2]{Hinkkanen}. In the quasiregular setting, a class of maps introduced by Zorich provide a suitable analogue of the complex exponential function; see, for example, \cite[\S6.5.4]{Iwaniec01} or \cite[\S I.3.3]{Rickman}. For our purposes, it is enough to know that there exists a quasiregular Zorich map $Z\colon \R^d \to \R^d\setminus\{0\}$ that is periodic in the first $d-1$ co-ordinates, and that is a surjection from $[-1,1)^{d-1}\times\{t\}$ to $\{x\in\R^d: |x|=e^t\}$ for all $t\in\R$.

Assume now that the function $f$, sequence $(x_n')$ and constants $L'$ and $s$ satisfy statement~(c) of Theorem~\ref{thm3}.

Let $g=f \circ Z$, where $Z$ is the Zorich map mentioned above. Then $g$ is a $K'$-quasiregular map of $\R^d$ for some $K'\ge 1$. Choose $\beta>0$ sufficiently large that $\log \beta > 2\sqrt{d-1}$ and also that the Harnack constant~$a$ for the function~$g$ with radii $R=\log\beta$ and $\rho=2\sqrt{d-1}$ satisfies
\eqn \label{eqn:choice_a}
a= \exp\left(C_{d,K'}\left(\log\left(\frac{\log\beta}{2\sqrt{d-1}}\right)\right)^{-1}\right) < \frac1s.
\eqnend

We now claim that, for all $n\in\N$,
\eqn \label{eqn:exists_xn}
\mbox{there exists } x_n\in\R^d \mbox{ such that } \frac1\beta < \frac{|x_n|}{|x'_n|} < \beta \mbox{ and } |f(x_n)|\le 1.
\eqnend
Observe that statement (b) follows from (c) and this claim, by taking $L=\beta^2 L'$ and $c=1$.

In order to prove the claim, we suppose that \eqref{eqn:exists_xn} fails for some $n\in\N$. Writing $r=|x_n'|$, this means that $|f(x)|>1$ for all $x\in A(r/\beta,\beta r)$. The Zorich function $Z$ maps the region
\[ S=\R^{d-1}\times (\log(r/\beta), \log(\beta r)) \]
into $A(r/\beta, \beta r)$ and hence $|g(x)|>1$ for all $x\in S$.

Let $y\in\R^d$ be a maximum modulus point of $f$ with $|y|=r$; that is,
\eqn \label{eqn:y-maxmod}
|f(y)|= M(r,f) = M(|x'_n|, f).
\eqnend
By the property of $Z$  mentioned above, there exist points
\eqn \label{eqn:v,w}
v,w \in [-1,1)^{d-1}\times\{\log r\} \mbox{ with } Z(v)=x'_n \mbox{ and } Z(w)=y.
\eqnend
It follows that
\[ v\in B(w, 2\sqrt{d-1}) \subset B(w, \log\beta) \subset S. \]
Hence Harnack's inequality (Lemma~\ref{lem:Harnack}) now shows that $\log|g(w)| \le a \log|g(v)|$. Using \eqref{eqn:choice_a}, \eqref{eqn:y-maxmod} and \eqref{eqn:v,w}, this gives that $\log M(|x'_n|,f) < \frac1s\log|f(x_n')|$,  in contradiction to statement~(c).
This proves the claim that \eqref{eqn:exists_xn} holds for all $n\in\N$, and therefore completes the proof that (c) implies~(b).

\end{document}